\newtheorem{theorem}{Theorem}
\newtheorem{lemma}[theorem]{Lemma}
\newtheorem{corollary}[theorem]{Corollary}
\theoremstyle{remark}
\numberwithin{equation}{section}
\def\HI{{\mathscr{F}}}
\def\0{{\mathbf{0}}}
\def\R{{\mathbb R}}
\def\et{{\tilde{\mathbf e}}}
\def\FF{\mathbf{F}_L}
\def\div{\hbox{\rm div}\,}
\def\ve{\mathbf v}
\def\vp{\varphi}
\def\ee{\mathbf e}
\def\e{\varepsilon}
\def\ve{{\mathbf v}}
\def\e{\epsilon}
\def\we{{\mathbf w}}
\def\wl{{\mathbf w}_L}
\def\Pl{\Phi_L}
\def\pl{p_L}
\def\ww{{\mathbf w}_0}
\def\0{{\mathbf 0}}
\def\wi{{\mathbf w}_\infty}
\def\div{\hbox{\rm div}\,}
\newcommand{\meas}{\mathop{\mathrm{meas}}}
\def\Xint#1{\mathchoice
{\XXint\displaystyle\textstyle{#1}}%
{\XXint\textstyle\scriptstyle{#1}}%
{\XXint\scriptstyle\scriptscriptstyle{#1}}%
{\XXint\scriptscriptstyle\scriptscriptstyle{#1}}%
\!\int}
\def\XXint#1#2#3{{\setbox0=\hbox{$#1{#2#3}{\int}$ }
\vcenter{\hbox{$#2#3$ }}\kern-.6\wd0}}
\def\dashint{\Xint-}
\begin{document}

\title{Leray's plane stationary solutions at small Reynolds numbers}
\author{Mikhail Korobkov\footnotemark[1] \and Xiao Ren\footnotemark[2]}
\renewcommand{\thefootnote}{\fnsymbol{footnote}}
\footnotetext[1]{
School of Mathematical Sciences,
Fudan University, Shanghai 200433, P. R.China; and Sobolev Institute of Mathematics, pr-t Ac. Koptyug, 4, Novosibirsk, 630090, Russia.  email: korob@math.nsc.ru}
\footnotetext[2]{School of Mathematical Sciences,
Fudan University, Shanghai 200433, P. R.China. email: xiaoren18@fudan.edu.cn}

\maketitle

\begin{abstract}
 In the celebrated 1933 paper \cite{Leray} by J. Leray, the invading domains method was proposed to construct $D$-solutions for the stationary Navier-Stokes flow around obstacle problem. In two dimensions, whether Leray's $D$-solution achieves the prescribed limiting velocity at spatial infinity became a major open problem since then. In this paper, we solve this problem at small Reynolds numbers. The proof builds on a novel blow-down argument which rescales the invading domains to the unit disc, as well as the ideas developed in the~recent works \cite{KPR20, KR21}. 
\end{abstract}
\renewcommand{\thefootnote}{\arabic{footnote}} 
\section*{Introduction}
This paper studies the~stationary Navier-Stokes equations in an exterior domain $\Omega$, with Dirichlet boundary condition on $\partial \Omega$ and nonzero prescribed velocity at spatial infinity, that is, 
\begin{equation}  \label{NSE}
\left\{
\begin{aligned}
 & \Delta \mathbf{w} - (\mathbf{w} \cdot \nabla) \mathbf{w} - \nabla p = 0, \\
 & \nabla \cdot \mathbf{w} = 0, \\
 & \mathbf{w}|_{\partial \Omega} = 0, \\
 & \mathbf{w}(z) \to\mathbf{w}_\infty=\lambda \mathbf{e}_1 \ \ \text{as}\ \  |z| \to \infty.
\end{aligned}
\right.
\end{equation} 
The parameter $\lambda > 0$ will be referred to as the Reynolds number. Here $\mathbf{e}_1=(1,0)$ is the unit vector along $x$-axis, and the~considered domain is of type $\Omega = \mathbb{R}^2 \setminus \bar{U}$, where $U$ is an open bounded domain with sufficiently smooth ($C^{2+\alpha}, \alpha > 0$ would be sufficient) boundary. Physically, the system \eqref{NSE} describes the stationary motion of a viscous incompressible fluid flowing past a~rigid cylindrical body, a classical problem in fluid mechanics. In the prominent lecture by professor V.I.~Yudovich, which he gave at the University of Cambridge and published in~\cite{Y}, the existence of solutions to~(\ref{NSE}) for arbitrary $\lambda>0$ was included in the list of the most important open problems in mathematical fluid mechanics.

Dating back to 1933, J. Leray \cite{Leray} suggested the following elegant approach to construct solutions to \eqref{NSE}, called  ``{\it the~invading domains method}\,''. (This method is particularly natural from the point of view of physical applications and numerical simulations.) Consider the finite domains $\Omega_k = \Omega \cap B_{R_k}$ where $B_{R_k}$ is the disc centered at 0 with  radius $R_k\to+\infty$. These are called invading domains, since as $k \to \infty$, $\Omega_k$ approaches the infinite exterior domain $\Omega$. In \cite{Leray} Leray was able to establish the existence of solutions to the Navier-Stokes boundary value problem on $\Omega_k$,
\begin{equation}
\label{NSE_k}
\left\{\begin{array}{r@{}l}
- \Delta{{\mathbf w}_k}+({\mathbf w}_k\cdot\nabla){\mathbf w}_k+ \nabla p_k  & {} ={\bf 0}\qquad \hbox{\rm in } \Omega\cap B_{R_k}, \\[2pt]
\div{{\mathbf w}_k} & {} =0\,\qquad \hbox{\rm in } \Omega\cap B_{R_k},\\[2pt]
{{\mathbf w}_k} & {} = \mathbf0\,\qquad \hbox{\rm on } \partial\Omega,  \\[2pt]
{\mathbf w}_k & {}=\we_\infty\;\quad\mbox{for \ }|z|=R_k.
 \end{array}\right.
\end{equation}
Moreover, he proved that the sequence $\mathbf{w}_k$ has uniformly bounded Dirichlet energy, i.e.,
\begin{equation}
\int_{\Omega_k} |\nabla \mathbf{w}_k|^2  \le C < \infty
\end{equation}
for some constant $C$ depending only on $\partial \Omega$ and $\lambda$. As a consequence, he observes that it is possible to extract a subsequence of $\mathbf{w}_{k}$ which converges weakly\footnote{This convergence is uniform on every bounded set.} to some function $\mathbf{w}_L$ that solves \eqref{NSE}$_{1,2,3}$. The function~$\we_L$ is now referred to as a Leray solution. However, the following problem turns out to be challenging:
\smallskip

\emph{ Does $\mathbf{w}_L$ satisfy the limiting velocity condition \eqref{NSE}$_{4}$, i.e., do we have
\begin{equation} \label{eq:correctlimit}
\mathbf{w}_L(z) \to \mathbf{w}_\infty\ \ \mbox{\rm as}\ \ |z| \to \infty ?
\end{equation}}

Many useful properties of Leray solutions were discovered in the classical papers by D.~Gilbarg and H.F.~Weinberger \cite{GW1}--\cite{GW}. Further, in the very deep paper~\cite{Amick} Ch.Amick proved, under additional axial symmetry assumption, that the Leray solutions are nontrivial (i.e., they are not identically zero) and that they have some uniform limits at infinity, i.e., there exists a constant vector~$\mathbf{w}_0\in\R^2$ such that 
\begin{equation}  \label{NSE-cc1}  \mathbf{w}_L(z) \to \mathbf{w}_0 \ \ \text{as}\ \  |z| \to \infty.
\end{equation} 
Very recently, in the joint papers by Korobkov--Pileckas--Russo~\cite{KPR}--\cite{KPR20},  Amick's additional symmetry assumption in the above results was removed. 

Nevertheless, despite the classical papers and  the recent progress, the fundamental question, whether or not the Leray solutions satisfy the limiting condition~(\ref{eq:correctlimit}), i.e., whether the~equality 
$\mathbf{w}_0=\we_\infty$ holds, was still unanswered. In other words, it remained unclear  whether one can construct  genuine solutions to the initial problem~(\ref{NSE}) by Leray's method. 

In this paper, we establish the convergence \eqref{eq:correctlimit} and justify Leray's approach in the small Reynolds numbers case. Moreover, due to the recent uniqueness result \cite{KR21}, we can actually prove the convergence of $\mathbf{w}_k$ to the desired solution of \eqref{NSE} without the need of taking subsequences.

\begin{theorem} \label{thm_main}
{\sl There exists a constant $\lambda_1>0$ depending only on the geometry of $\partial \Omega$ such that, for any $0 < \lambda \le \lambda_1$, the invading domains solutions $(\mathbf{w}_k, p_k)$ to \eqref{NSE_k} converge weakly to the unique D-solution to \eqref{NSE}, as $k \to \infty$.}
\end{theorem}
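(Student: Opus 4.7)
The plan is a four-step strategy combining a novel blow-down rescaling with the structural results of \cite{KPR20} and the uniqueness theorem of \cite{KR21}.

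\textbf{Step 1 (Blow-down to the unit disc).} Rescale the invading-domain solutions so that the outer disc becomes $B_1$: set $\tilde{\mathbf{w}}_k(y) := \mathbf{w}_k(R_k y)$ on $\tilde\Omega_k := R_k^{-1}\Omega_k$, with a matching rescaling of the pressure. The sequence then solves a Navier--Stokes system on $\tilde\Omega_k$ with effective viscosity $R_k^{-1}$, outer Dirichlet data $\lambda\mathbf{e}_1$, and zero data on the inner boundary $R_k^{-1}\partial\Omega$, which contracts to the origin. In two dimensions the Dirichlet integral is scale invariant, so Leray's uniform estimate transfers verbatim: $\|\nabla\tilde{\mathbf{w}}_k\|_{L^2(\tilde\Omega_k)} \le C(\partial\Omega,\lambda)$; the Gilbarg--Weinberger $L^\infty$ bound on $\mathbf{w}_k$ also transfers. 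A subsequence therefore converges weakly in $H^1_{\loc}(\overline{B}_1 \setminus \{0\})$ and weakly-$*$ in $L^\infty$ to some $\tilde{\mathbf{w}}_\infty$ with $\tilde{\mathbf{w}}_\infty|_{\partial B_1} = \lambda\mathbf{e}_1$.

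\textbf{Step 2 (Identification of the blow-down limit).} This is the crux: one has to show $\tilde{\mathbf{w}}_\infty \equiv \lambda\mathbf{e}_1$ on $B_1$. Since the effective Reynolds number $R_k$ tends to infinity, a formal passage to the limit in the rescaled equation yields a solution of the Euler system on the punctured disc matching $\lambda\mathbf{e}_1$ on $\partial B_1$, and this problem generically admits many solutions (boundary layers, interior vortices). The strategy is to import, after rescaling, the total-head-pressure machinery of \cite{KPR20}: the uniform geometric control on level sets of $\Phi_k = p_k + |\mathbf{w}_k|^2/2$ survives the blow-down and forces rigidity for the level sets of the limit $\tilde\Phi_\infty$. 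Combined with the smallness assumption $\lambda \le \lambda_1$, which provides a perturbative grip around the constant background $\lambda\mathbf{e}_1$ in the spirit of \cite{KR21}, a Bernoulli/maximum-principle argument then excludes every nontrivial candidate and singles out $\tilde{\mathbf{w}}_\infty \equiv \lambda\mathbf{e}_1$. Standard upgrade from weak $H^1$ to local uniform convergence on compact subsets of $\overline{B}_1\setminus\{0\}$ follows by elliptic regularity for the rescaled system.

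\textbf{Step 3 (Correct limiting velocity and removal of the subsequence).} Translating back, for every fixed $\rho \in (0,1)$ one obtains $\mathbf{w}_k(z) \to \lambda\mathbf{e}_1$ uniformly on the annulus $\{\rho R_k < |z| < R_k\}$. Combined with the standard interior compactness producing a subsequential Leray limit $\mathbf{w}_L$ on $\Omega$ and with the uniform-limit statement $\mathbf{w}_L(z) \to \mathbf{w}_0$ at infinity proved in \cite{KPR20}, this forces $\mathbf{w}_0 = \lambda\mathbf{e}_1 = \mathbf{w}_\infty$. Hence $\mathbf{w}_L$ is a genuine D-solution of~\eqref{NSE}. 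The small-Reynolds uniqueness result of \cite{KR21} then implies that every subsequential weak limit of $(\mathbf{w}_k, p_k)$ coincides with this unique D-solution, which promotes subsequential weak convergence to weak convergence of the full sequence.

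The principal obstacle is Step 2: ruling out nontrivial blow-down limits. The $R_k^{-1}$ viscosity regime is precisely the classical vanishing-viscosity setting, in which Dirichlet data on the outer boundary are generally not inherited by the Euler limit, so neither the Dirichlet bound nor the $L^\infty$ bound alone pins down $\tilde{\mathbf{w}}_\infty$. The resolution must combine both the KPR level-set rigidity (to control possible oscillations of $\tilde\Phi_\infty$) and the smallness of $\lambda$ (to linearize around the background $\lambda\mathbf{e}_1$); absent either ingredient the blow-down identification would fail, which is exactly why the theorem is restricted to $\lambda \le \lambda_1$.
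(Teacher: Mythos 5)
Your Step 1 is essentially the paper's blow-down (the paper additionally normalizes by $\lambda^{-1}$, so the limit $\mathbf{u}_E$ solves the Euler system \eqref{Eul-intr} with boundary data $\ee_1$ and Dirichlet energy at most $\lambda^{-2}D_*$, where $D_*$ is the tail of the Dirichlet integral), and your final use of the uniqueness theorem of \cite{KR21} to remove the subsequence is also how the paper concludes. However, Steps 2--3 contain genuine gaps. Step 2 proposes to prove that the blow-down limit is identically $\lambda\ee_1$, but supplies no mechanism: ``import the level-set machinery \dots\ a Bernoulli/maximum-principle argument then excludes every nontrivial candidate'' is a wish, not an argument. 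The paper neither proves nor needs this identification. What it actually extracts from the blow-down is a quantitative estimate for the limiting Euler \emph{pressure}: if $\int_{B_1}|\nabla\mathbf{u}_E|^2=\epsilon^2$, then the oscillation of $p_E$ over $\bar B_1$ is $O(\epsilon^2)$, not $O(\epsilon)$ (Theorem~\ref{th:euler_est}), proved via the div-curl/Hardy-space $W^{2,1}$ theory together with a hidden Neumann condition for $p_E$ on $S_1$. This produces ``good circles'' $S_{(1-\delta_0)R_k}$ on which $\lambda|\mathbf{w}_k-\lambda\ee_1|+|p_k|\le CD_*+\epsilon_k$; that is the whole output of the blow-down, and it is entirely absent from your proposal.

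More seriously, even if Step 2 were established, Step 3 would not follow. The region where $\mathbf{w}_k\approx\mathbf{w}_0$ is $\{|z|\le R_{k1}\}$ with $R_{k1}/R_k\to0$; under the blow-down it collapses to the origin, a set of zero $W^{1,2}$-capacity, so the Euler limit carries no information about $\mathbf{w}_0$. Conversely, convergence to $\lambda\ee_1$ on $\{\rho R_k<|z|<R_k\}$ for each fixed $\rho$ leaves the intermediate annulus $\{R_{k1}<|z|<\rho R_k\}$ uncontrolled, where by \eqref{bardiff} the circle averages of $\mathbf{w}_k$ may drift by $\bigl(D_k\ln(\rho R_k/R_{k1})\bigr)^{1/2}$, which is not small. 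Bridging precisely this intermediate region is the substance of the paper's proof, and it requires ingredients your outline omits: the logarithmic smallness $D_k\le C\lambda^2/|\ln\lambda|$ (Lemma~\ref{lem:Dkuniform}), the force identities $D_*=\mathbf{F}_L\cdot(\mathbf{w}_\infty-\mathbf{w}_0)$ combined with the Finn--Smith asymptotics $\mathbf{F}_L\approx 4\pi\lambda_0\ee_0/|\ln\lambda_0|$, and the resulting strict gap between the Bernoulli pressure $\Phi_k=p_k+\frac12|\mathbf{w}_k|^2$ on $S_{R_{k1}}$ (where $\Phi_k\approx\frac12\lambda_0^2$) and on $S_{(1-\delta_0)R_k}$ (where $\Phi_k\approx\frac12\lambda^2$ up to $CD_*$), which contradicts the level-set/vorticity argument of \cite{KPR20} via $\nabla\Phi_k=-\nabla^\perp\omega_k+\omega_k\mathbf{w}_k^\perp$. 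Without these steps the proposal does not reach the conclusion.
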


The solutions to \eqref{NSE} for small Renolds numbers $\lambda$ were first constructed in the classical work \cite{FS} of Finn-Smith  in 1967. They used another approach, which is very different from Leray's invading domains method. This approach is based on delicate Oseen system estimates in the exterior domain and contraction mapping arguments. The relation between the solutions generated by the two distinct approaches (Finn-Smith versus Leray) was largely unclear since then.  Recently, in \cite{KR21} we proved  that the Finn-Smith solutions are the only $D$-solutions to \eqref{NSE} when $\lambda$ is sufficiently small, by developing the ideas in Amick's elegant work \cite{Amick}. It is interesting to point out that the original motivation of \cite{Amick} was to investigate Leray's approach.  Our Theorem \ref{thm_main} unifies these two approaches when $\lambda$ is small, by showing that they produce exactly the same solutions.

More detailed survey of results concerning boundary value problems for stationary NS-system in plane exterior domains can be found, $e.g.$, in \cite{G}, \cite{JG}. The subject is still a source of   interest, as evidenced, $e.g.$,  by the   papers  \cite{G2}, \cite{GuilW1}--\cite{HW}.

Now let us describe the main ideas and approaches of  the paper. 
We start with an interesting and very useful result stating that any~ $D$-solutions to \eqref{NSE_k} have \textit{extra} small Dirichlet energy when $\lambda$ is small:
\begin{equation} \label{Dlambda--iii}
   D_k= \int_{\Omega_k} |\nabla \mathbf{w}_k|^2 \le \frac{C\lambda^2}{|\log\lambda|}
 \end{equation}
 for $k$ large enough, where $C$ does not depend on $k$ or $\lambda$. 
 Note, that similar logarithmic smallness was proved in \cite[Theorem 22]{Amick}  for  the Dirichlet energy of the solutions to Stokes system, and in~\cite{KR21} for solutions to~(\ref{NSE}) in exterior domain.  

Further, we have to develop the methods of the recent paper~\cite{KPR20}, where it was proved that Leray solutions are always nontrivial. In particular, in~\cite{KPR20} it was proved that for a~fixed $\lambda>0$ the values of  Dirichlet integrals $D_k$ cannot be ``too small'', i.e., the estimate from below~$D_k\ge \sigma>0$ holds for some $\sigma$ independent of~$k$. In the present paper, by developing these methods, we prove  that the gap $\ww\ne\wi$ is impossible under smallness conditions~(\ref{Dlambda--iii}).

The main novelty here is a new estimate for limiting solutions to Euler equations, which is also of independent interest. Namely, consider the rescaled velocity field $\tilde{\mathbf{w}}_k(z) = \lambda^{-1} \mathbf{w}_k(R_k z)$, defined in the rescaled domains $\widetilde\Omega_k=\frac1{R_k}\Omega_k=B_1 \setminus (R_k^{-1} \Omega^c)$ tending to the unit disk $B_1=\{z\in\R^2:|z|<1\}$.  Then $\tilde{\mathbf{w}}_k$ have the bounded Dirichlet integrals $\lambda^{-2}D_k$, so $\tilde{\mathbf{w}}_k$ converge weakly (up to subsequence) to some function~$\mathbf{u}_E\in W^{1,2}(B_1)$.
It is easy to see that $\mathbf{u}_E$ is a solution to the Euler system in the unit disk:
\begin{equation}
\label{Eul-intr}\left\{\begin{array}{rcl} \big(\mathbf{u}_E\cdot\nabla\big)\mathbf{u}_E+\nabla p_E & = & 0 \qquad \ \ \
\hbox{\rm in }\;\;B_1,\\[4pt]
\div\mathbf{u}_E & = & 0\qquad \ \ \ \hbox{\rm in }\;\;B_1,
\\[4pt]
\mathbf{u}_E &  = & \ee_1\ \
 \qquad\  \hbox{\rm on }\;\;S_1=\partial B_1.
\end{array}\right.
\end{equation}
Denote $\e^2=\int\limits_{B_1}|\nabla\mathbf{u}_E|^2$. Then from the first equation~(\ref{Eul-intr}${}_1$), one may guess that $|p_E|\sim\e$. 
Nevertheless, surprisingly much better estimate holds
\begin{equation}
\label{de-e-intr}\sup\limits_{z_1,z_2\in \bar B_1}|p_E(z_1)-p_E(z_2)|\le C\,\e^2,
\end{equation}
for some universal constant~$C$; this estimate plays the crucial role in 
adapting the methods of paper~\cite{KPR20} to the present case.

The rest of the paper is organized as follows. In Section~\ref{sec:2}, we introduce some frequently used notations and lemmas. In Section~\ref{sec:3}, the important estimate~(\ref{de-e-intr}) for solutions to the Euler system~(\ref{Eul-intr}) is established. In Section~\ref{sec:4}, a crucial smallness of Dirichlet energy~(\ref{Dlambda--iii}) is proved which has various interesting consequences. In Section~\ref{sec:5}, we describe the blow-down argument (i.e., the convergence of the rescaled solutions~$\tilde{\mathbf{w}}_k$ to the Euler solutions, which leads to key estimates for the Bernoulli pressure on certain ``good'' circles). Finally, the proof of main Theorem~\ref{thm_main} is finished in Sections~\ref{sec:6}--\ref{sec:7} for the symmetric and for general cases respectively.

\section{Notations and preliminaries}
\label{sec:2}

\subsection{Notations}
Throughout the paper, the Reynolds number $\lambda$ will be a small positive number. The exact smallness requirement on $\lambda$ will be determined by the proof.  

We use the notation $\epsilon_k$ to represent any sequence of positive numbers that converges to $0$ as $k \to \infty$. The exact meaning of $\epsilon_k$ may change from line to line. The expression $|A_k| \le \epsilon_k$ is equivalent to $|A_k| \to 0$ as $k \to \infty$. We use $C$ to denote positive constants that are independent of $\lambda$ and $k$.

The pair $(\mathbf{w}_k, p_k)$ will always be a sequence of solutions to \eqref{NSE} on invading domains $\Omega_k$. $(\mathbf{w}_L, p_L)$ will always stand for Leray's $D$-solution in $\Omega$, that is, the weak limit of $(\mathbf{w}_k, p_k)$ as $k \to \infty$.

We use standard notations for Sobolev spaces $W^{m,q}(\Omega)$, where $m \in \mathbb{N}$, $q\in [1,+\infty]$. In our proof we do not distinguish the function spaces for scalar or vector valued functions, since it will be clear from the context which one we mean. 

\subsection{Preliminary facts for $\mathbf{w}_k$ and general $D$-solutions}

Below we list some basic estimates of $\mathbf{w}_k$ and $p_k$ known in the literature. Let $\omega_k = \partial_2 w_{k,1}- \partial_1 w_{k,2}$ be the vorticity. We remark that, as $\delta \to 0$, the constants $C_{\delta, \lambda}$ below obtained using general methods would increase to infinity. Thus, precise control of the behaviours of $\mathbf{w}_k$ and $p_k$ is not available near the outer boundary $S_{R_k}$, essentially due to the huge size of the invading domains. 

\begin{lemma} \label{lem:basic_regularity}
For any $0<\delta<\frac12$, there exists a constant $C_{\delta, \lambda}$(depending only on $\partial \Omega, \delta$ and $\lambda$) such that the following estimates hold uniformly in $k$:
\begin{enumerate}
\item  $\max_{\Omega_k \cap B_{(1-\delta) R_k}}|\mathbf{w}_k| + |p_k| \le C_{\delta, \lambda}$,
\item  $\int_{\Omega_k \cap B_{(1-\delta) R_k}} r|\nabla \omega_k|^2 \le C_{\delta, \lambda}$, 
\item  $\int_{\Omega_k \cap B_{(1-\delta) R_k}} |\nabla p_k|^2 \le C_{\delta, \lambda}$.
\end{enumerate}
\end{lemma}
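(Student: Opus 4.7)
The plan is to combine Leray's a priori Dirichlet bound $\int_{\Omega_k}|\nabla\mathbf{w}_k|^2 \le C(\lambda, \partial\Omega)$ with standard interior and boundary regularity for the stationary Navier--Stokes system, exploiting the fact that $\Omega_k \cap B_{(1-\delta)R_k}$ sits uniformly far from the outer boundary $S_{R_k}$. The whole point of the $\delta$-dependence of the constants is to have a definite buffer against the outer circle, where $\mathbf{w}_k$ is only weakly controlled.

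First I would work on the slightly larger subdomain $\Omega \cap B_{(1-\delta/2)R_k}$. Subtracting a smooth extension $\lambda\chi\mathbf{e}_1$ of the outer Dirichlet data (with $\chi$ supported away from $\partial\Omega$), the perturbation vanishes on $\partial\Omega$, so a Poincaré inequality upgrades the gradient bound to a full $W^{1,2}$ bound, and two-dimensional Sobolev embedding gives $\mathbf{w}_k \in L^q$ for every finite $q$. Viewing \eqref{NSE_k} as a Stokes system with forcing $(\mathbf{w}_k\cdot\nabla)\mathbf{w}_k$, local $L^p$-theory (Cattabriga--Solonnikov type, combined with boundary regularity at the smooth fixed boundary $\partial\Omega$) plus standard bootstrapping places $\mathbf{w}_k$ in $W^{2,p}_{\loc} \hookrightarrow L^\infty_{\loc}$ uniformly in $k$. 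The pressure, once normalized by a fixed mean-zero condition over a bounded set adjacent to $\partial\Omega$, inherits an $L^\infty$ bound from the same theory, giving assertion~(1). Assertion~(3) then follows by applying standard $W^{2,2}$ estimates to the Poisson equation $\Delta p_k = -\partial_i\partial_j(w_{k,i}w_{k,j})$ together with a cutoff localized in $\Omega_k \cap B_{(1-\delta)R_k}$.

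For the weighted bound~(2), the vorticity satisfies the scalar transport--diffusion equation $\Delta\omega_k = (\mathbf{w}_k\cdot\nabla)\omega_k$ with drift now known to be bounded. Testing with $r\eta^2\omega_k$ for a suitable smooth cutoff $\eta$ localizing in $\Omega_k\cap B_{(1-\delta)R_k}$ and integrating by parts, in the spirit of Gilbarg--Weinberger, yields $\int r|\nabla\omega_k|^2 \le C_{\delta,\lambda}$ after absorbing the zeroth-order term via (1) and the gradient bound on $\omega_k$ coming from $\int|\nabla\mathbf{w}_k|^2 \le C$.

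The main obstacle is simply the bookkeeping to guarantee uniformity in $k$. This succeeds because any ball of radius comparable to $\delta R_k$ needed for the local $L^p$-theory lies well inside $\Omega_k$, so interior estimates apply with $k$-independent constants; at the inner boundary, the $C^{2+\alpha}$ smoothness of $\partial\Omega$ permits boundary regularity with constants depending only on $\partial\Omega$. The $\delta$-dependence of $C_{\delta,\lambda}$ arises through the extension step (where the cutoff derivative scales like $(\delta R_k)^{-1}$ but is compensated by the area of the transition region), and would blow up if one tried to let $\delta \to 0$.
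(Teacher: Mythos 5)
The paper does not reprove these estimates: Claims~1 and~2 are quoted directly from Gilbarg--Weinberger \cite{GW1} (Lemmas~2.4, 2.6 and~3.2 there), and Claim~3 is read off from the momentum equation, $\nabla p_k=\nabla^\perp\omega_k-(\mathbf{w}_k\cdot\nabla)\mathbf{w}_k$, using Claims~1 and~2. Your Claims~2 and~3, granted Claim~1, are essentially sound (your energy argument for the vorticity is in the spirit of \cite{GW1}, and your Poisson-equation route to Claim~3 is a workable, if more roundabout, alternative to the paper's one-line deduction).

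The genuine gap is in your proof of Claim~1, which is precisely the non-trivial content of the lemma. In two dimensions the Dirichlet integral fails logarithmically to control $L^\infty$: from $\int_{\Omega_k}|\nabla\mathbf{w}_k|^2\le C$ and the boundary data one only gets, via \eqref{bardiff}, that the circular means satisfy $|\bar{\mathbf{w}}_k(r)|\le C\min\bigl(\sqrt{\log r},\,\lambda+\sqrt{\log (R_k/r)}\bigr)$, which at intermediate radii $r\sim\sqrt{R_k}$ is of order $\sqrt{\log R_k}$ and hence \emph{not} uniformly bounded in $k$. Consequently the local $L^2$ (and $L^q$) norms of $\mathbf{w}_k$ on unit balls deep inside $\Omega_k\cap B_{(1-\delta)R_k}$ are not uniformly bounded a priori, and your bootstrap (interior Stokes/$L^p$ estimates with forcing $(\mathbf{w}_k\cdot\nabla)\mathbf{w}_k$) can only return a bound of the form $|\mathbf{w}_k(z)|\le C\sqrt{\log|z|}$, not the uniform bound asserted. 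Your Poincar\'e step is also invalid as stated: the Poincar\'e constant on $\Omega_k\cap B_{(1-\delta/2)R_k}$ scales like the diameter $R_k$ and so gives nothing uniform. Ruling out the logarithmic growth is exactly what Gilbarg--Weinberger's argument achieves, and it uses the structure of the equations in an essential way: the one-sided maximum principle for the total head pressure $\Phi_k=p_k+\tfrac12|\mathbf{w}_k|^2$ (which satisfies $\Delta\Phi_k-\mathbf{w}_k\cdot\nabla\Phi_k=\omega_k^2\ge0$) combined with a separate uniform bound on $p_k$ coming from the compensated-compactness (div--curl/Hardy space) structure of $\Delta p_k=-\nabla\mathbf{w}_k\cdot(\nabla\mathbf{w}_k)^\intercal$ --- note that $\Delta p_k\in L^1$ alone would not give a bounded Newtonian potential, so the $L^\infty$ bound on the pressure is not ``standard elliptic theory'' either. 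Without some substitute for these two ingredients, Claim~1, and with it your derivations of Claims~2 and~3, does not close.
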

\begin{proof}
For Claim \emph{1}, see \cite[Lemma 2.4, Lemma 2.6]{GW1}. For Claim \emph{2}, see \cite[Lemma 3.2]{GW1}. (The proofs in \cite{GW1} were carried out for domains like $\Omega_k \cap B_{\frac12R_k}$, but they clearly work for  $\Omega_k \cap B_{(1-\delta) R_k}$ with any $\delta > 0$ as well.) By the equation (\ref{NSE})$_1$, Claim \emph{3} is a simple corollary of \emph{1} and \emph{2}. 
\end{proof}

Next, we present two important lemmas for general $D$-solutions to the Navier-Stokes equations. They have been very useful in many previous studies on the Navier-Stokes exterior problem, see, for instance, \cite{GW, Amick, KPR, KPR20, KR21}. Lemma \ref{lem:pressure} was proved in \cite[Lemma 4.1]{GW}, and Lemma \ref{lem:angle} was proved in \cite[Theorem~4, page~399]{GW}.  Let $\mathbf{w}$ be a $D$-solution to the Navier-Stokes equations in some ring $\Omega_{r_1, r_2} = \{z \in \mathbb{R}^2: 0 < r_1 <|z| < r_2\}$, and $p$ be the corresponding pressure.

\begin{lemma}[\cite{GW}] \label{lem:pressure}
 Denote by $\bar{p}(r)$ the average of $p$ over the circle $S_r$. Then for any $r_1 < \rho_1 \le \rho_2 < r_2$, we have
 \begin{equation} \label{eq:pdifference}
  |\bar{p}(\rho_2) - \bar{p}(\rho_1)| \le \frac{1}{4\pi}  \int_{\Omega_{\rho_1, \rho_2}} |\nabla \mathbf{w}|^2
 \end{equation}
\end{lemma}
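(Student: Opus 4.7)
The proof should proceed by exploiting the polar structure of the stationary Navier--Stokes equations. First I would take the radial component of the momentum equation $\nabla p = \Delta\mathbf{w} - (\mathbf{w}\cdot\nabla)\mathbf{w}$. Since $\mathbf{w}$ is divergence-free, one has $\Delta\mathbf{w} = \nabla^\perp\omega$ with $\omega = \partial_2 w_1 - \partial_1 w_2$, so $\mathbf{e}_r\cdot\Delta\mathbf{w} = \frac{1}{r}\partial_\theta\omega$, which integrates to zero over each circle. The convective term decomposes as $\mathbf{e}_r\cdot(\mathbf{w}\cdot\nabla)\mathbf{w} = (\mathbf{w}\cdot\nabla)w_r - \frac{w_\theta^2}{r}$; integration over $\theta$ combined with integration by parts using the polar incompressibility identity $\partial_\theta w_\theta = -\partial_r(rw_r)$ yields the ODE
$$2\pi\bar p'(r) = \frac{1}{r}\int_0^{2\pi}(w_\theta^2 - w_r^2)\,d\theta - \partial_r\!\!\int_0^{2\pi}\!w_r^2\,d\theta.$$
Integrating over $[\rho_1,\rho_2]$ gives $4\pi(\bar p(\rho_2)-\bar p(\rho_1))$ as a volume integral on the annulus plus boundary line integrals on $S_{\rho_1}$ and $S_{\rho_2}$.

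Next I would match this against a polar expansion of the Dirichlet energy. Using incompressibility in the form $(\partial_\theta w_\theta + w_r)^2/r^2 = (\partial_r w_r)^2$, the integrand simplifies to
$$|\nabla\mathbf{w}|^2 = 2(\partial_r w_r)^2 + (\partial_r w_\theta)^2 + \frac{(\partial_\theta w_r - w_\theta)^2}{r^2}.$$
Integrating the third term over the annulus, expanding the square, and performing a further integration by parts in $\theta$ using $\partial_\theta w_\theta = -\partial_r(rw_r)$ reproduces exactly the volume and boundary combinations that appeared in the pressure identity. Collecting terms one should be able to write $\int_{\Omega_{\rho_1,\rho_2}}|\nabla\mathbf{w}|^2$ as the sum of manifestly nonnegative squared-gradient contributions together with $\pm\,4\pi(\bar p(\rho_2)-\bar p(\rho_1))$, from which both sides of the desired inequality will follow.

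The main obstacle is the careful bookkeeping of the boundary contributions $\int w_r^2(\rho_i,\theta)\,d\theta$: these are not a priori controlled by the annular Dirichlet integral, and only the specific algebraic structure coming from incompressibility combined with the Navier--Stokes identity for $\bar p'(r)$ allows them to cancel between the two identities. Sharpness of the constant $\tfrac{1}{4\pi}$ can be checked on the linear stagnation flow with stream function $\psi = \tfrac14 r^2\sin 2\theta$, for which $\omega\equiv 0$ and $p + \tfrac12|\mathbf{w}|^2$ is constant; a direct calculation shows that this example saturates the inequality.
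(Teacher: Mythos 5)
The paper itself contains no proof of this lemma: it is quoted from Gilbarg--Weinberger \cite{GW}, Lemma 4.1, so the only question is whether your independent derivation closes. Your two preparatory identities are correct. The averaged radial momentum equation, combined with $\partial_\theta w_\theta=-\partial_r(rw_r)$, does give
\begin{equation*}
2\pi\,\bar p'(r)=\frac1r\int_0^{2\pi}\bigl(w_\theta^2-w_r^2\bigr)\,d\theta-\frac{d}{dr}\int_0^{2\pi}w_r^2\,d\theta,
\end{equation*}
and incompressibility does reduce the Dirichlet integrand to $2(\partial_rw_r)^2+(\partial_rw_\theta)^2+r^{-2}(\partial_\theta w_r-w_\theta)^2$. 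The gap is the sentence ``collecting terms one should be able to write\dots'': that collection is never carried out, and it cannot be. After integrating your ODE over $[\rho_1,\rho_2]$, the right-hand side contains the annular integral of $r^{-2}w_\theta^2$, and the circulation mode $\overline{w_\theta}(r)$ is constrained neither by $\nabla\cdot\mathbf{w}=0$ nor by the radial momentum equation; absorbing it into $\tfrac12\int|\nabla\mathbf{w}|^2$ would require the Hardy-type inequality $\int_{\rho_1}^{\rho_2} r^{-1}\,\overline{w_\theta}^{\,2}\,dr\le\int_{\rho_1}^{\rho_2} r\,(\overline{w_\theta}')^2\,dr$, which is false without decay or boundary information from outside the annulus.

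Here is a concrete obstruction. The Taylor--Couette flow $\mathbf{w}=(Ar+B/r)\,\mathbf{e}_\theta$ with $p'(r)=w_\theta^2/r$ is a smooth exact solution of \eqref{NSE}$_{1,2}$ in any ring, and a direct computation (using $|\nabla\mathbf{w}|^2=(w_\theta')^2+w_\theta^2/r^2=2A^2+2B^2/r^4$) gives
\begin{equation*}
\bar p(\rho_2)-\bar p(\rho_1)-\frac1{4\pi}\int_{\Omega_{\rho_1,\rho_2}}|\nabla\mathbf{w}|^2\,dx=2AB\,\ln\frac{\rho_2}{\rho_1}>0\quad\text{for }A,B>0.
\end{equation*}
Hence no identity of the type you postulate --- Dirichlet integral equals manifestly nonnegative squares plus $\pm4\pi(\bar p(\rho_2)-\bar p(\rho_1))$ --- can hold for arbitrary solutions in a ring; the inequality is simply not a consequence of local algebra on the annulus. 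What the Gilbarg--Weinberger proof actually exploits, and what your sketch omits, is the global setting in which they work (a $D$-solution in a full neighbourhood of infinity with finite total Dirichlet integral), which excludes the rigidly rotating part $Ar$ and gives control of the swirl mode; correspondingly, in the present paper the lemma is only ever applied to solutions for which such extra information is available. Your sharpness check is accurate as far as it goes --- the stagnation flow, the potential vortex $B/r$ and the rigid rotation $Ar$ each saturate the constant $\frac1{4\pi}$ --- but the fact that a superposition of the last two violates the bound is exactly the signal that the purely local argument cannot be completed.
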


Denote by $\bar{\mathbf{w}}(r)$ the average of $\mathbf{w}$ over the circle $S_r$. Note, that the estimates for $\bar{\mathbf{w}}(r)$ are not so good: 
\begin{equation} \label{bardiff}
 |\bar{\mathbf{w}}(\rho_2) - \bar{\mathbf{w}}(\rho_1)| \le  \frac{1}{\sqrt{2\pi}}\left(\int_{\Omega_{\rho_1, \rho_2}} |\nabla \mathbf{w}|^2\right)^\frac12\cdot\left(\ln \frac{r_2}{r_1}\right)^\frac12
\end{equation}
(see, e.g., the section~2 in~\cite{KPR}). Nevertheless, the {\it direction} of $\bar{\mathbf{w}}(r)$ is still under control of the Dirichlet integral: 

\begin{lemma}[\cite{GW}] \label{lem:angle}
  Denote by $\varphi(r)$ the direction of the vector $\bar{\mathbf{w}}(r) = (\bar{w}_1(r), \bar{w}_2(r))$, i.e., 
$\bar{\mathbf{w}}(r)=|\bar\we(r)|\,(\cos\varphi(r),\sin\varphi(r))$.  Assume also that $|\bar{\mathbf{w}}(r)| \ge \sigma > 0$ for some constant $\sigma$ and for all $r \in (r_1, r_2)$. Then for any $r_1 < \rho_1 \le \rho_2 < r_2$, we have
 \begin{equation} \label{eq:phidifference}
  |\varphi(\rho_2) - \varphi(\rho_1)| \le \frac{1}{4\pi \sigma^2} \int_{\Omega_{\rho_1, \rho_2}} \left( \frac{1}{r}|\nabla \omega| + |\nabla \mathbf{w}|^2 \right) \ .
 \end{equation}
 Here $\omega = \partial_2 w_1 - \partial_1 w_2$ is the vorticity of $\mathbf{w}$.
\end{lemma}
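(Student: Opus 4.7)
The plan is to obtain a pointwise bound on $|\varphi'(r)|$ and integrate in $r$. Writing $\bar\we(r)=|\bar\we(r)|(\cos\varphi(r),\sin\varphi(r))$, the chain rule yields
$$\varphi'(r) = \frac{\bar w_1(r)\bar w_2'(r) - \bar w_2(r)\bar w_1'(r)}{|\bar\we(r)|^2},$$
so the factor $\sigma^{-2}$ enters automatically via the lower bound $|\bar\we|\ge\sigma$.

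To express $\bar w_i'(r)$ in terms of $\omega$, I would pass to polar coordinates $(r,\theta)$, write $\partial_r w_i = \cos\theta\,\partial_1 w_i + \sin\theta\,\partial_2 w_i$, use the incompressibility $\partial_1 w_1 + \partial_2 w_2 = 0$, and integrate by parts in $\theta$ (boundary terms cancelling by periodicity). This produces the identities
$$\bar w_1'(r) = \frac{1}{2\pi}\int_0^{2\pi}\sin\theta\,\omega\,d\theta, \quad \bar w_2'(r) = -\frac{1}{2\pi}\int_0^{2\pi}\cos\theta\,\omega\,d\theta,$$
so that $\varphi'(r) = -\frac{1}{2\pi|\bar\we|^2}\int_0^{2\pi}\omega\,(\bar w_1\cos\theta + \bar w_2\sin\theta)\,d\theta$.

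Next I would invoke the Navier-Stokes equation through the Bernoulli identity $\nabla\Phi = \nabla^\perp\omega - \omega\we^\perp$, where $\Phi = p + \frac{1}{2}|\we|^2$ and $\we^\perp = (w_2,-w_1)$, which in polar coordinates reads $\partial_\theta\omega = r(\partial_r\Phi + \omega w_\theta)$. Integrating by parts once more in $\theta$ in the formula for $\varphi'(r)$ and substituting this identity rewrites the angular integral as a pairing of $\partial_r\Phi + \omega w_\theta$ against the bounded oscillatory weight $\bar w_1\sin\theta - \bar w_2\cos\theta = |\bar\we|\sin(\theta-\varphi)$. Bounding $|\partial_r\Phi|\le|\nabla\omega|+|\omega||\we|$ (again by Bernoulli) and using $|\omega|\le\sqrt{2}\,|\nabla\we|$ produces the $|\nabla\omega|$ and $|\nabla\we|^2$ contributions. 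Integrating the pointwise bound over $r\in[\rho_1,\rho_2]$ and converting the iterated $d\theta\,dr$-integrals into area integrals via $dA = r\,dr\,d\theta$ then transforms the factor of $r$ coming from $|\partial_\theta\omega|\le r|\nabla\omega|$ into the $\frac{1}{r}|\nabla\omega|$ prefactor of the stated bound.

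The main obstacle is the control of the cross-term $|\omega||\we|$ by $|\nabla\we|^2$ alone: a naive Young's inequality leaves a residual $|\we|^2$ piece that must be absorbed, which I expect requires exploiting the mean-zero structure of the weight $|\bar\we|\sin(\theta-\varphi)$ on $S_r$ together with a Poincar\'e-type inequality on the circle. The careful bookkeeping of signs and constants through the two successive integrations by parts --- while tracking the precise factor $|\bar\we|^{-2}$ --- is the delicate part of the argument.
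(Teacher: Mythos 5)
The paper gives no proof of this lemma --- it is quoted directly from Gilbarg--Weinberger \cite[Theorem 4]{GW} --- so your proposal must be measured against their argument. Your opening moves agree with it: the formula $\varphi'=(\bar w_1\bar w_2'-\bar w_2\bar w_1')/|\bar{\mathbf{w}}|^2$ and the identities $\bar w_1'(r)=\frac{1}{2\pi}\int_0^{2\pi}\omega\sin\theta\,d\theta$, $\bar w_2'(r)=-\frac{1}{2\pi}\int_0^{2\pi}\omega\cos\theta\,d\theta$ are correct and give
\begin{equation*}
\varphi'(r)=-\frac{1}{2\pi|\bar{\mathbf{w}}(r)|^2}\int_0^{2\pi}\omega\,\bigl(\bar w_1\cos\theta+\bar w_2\sin\theta\bigr)\,d\theta.
\end{equation*}
From here, however, your route cannot produce the stated weights. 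Integrating by parts in $\theta$ and substituting $\partial_\theta\omega=r(\partial_r\Phi+\omega w_\theta)$ places a factor of $r$ in front of the entire angular integral (and, incidentally, cancels one power of $|\bar{\mathbf{w}}|$ against the weight $|\bar{\mathbf{w}}|\sin(\theta-\varphi)$, so the advertised $\sigma^{-2}$ does not in fact survive your computation). Since $dr\,d\theta=r^{-1}\,dA$, the resulting term $\iint r\,|\nabla\omega|\,dr\,d\theta$ equals $\int|\nabla\omega|\,dA$, \emph{not} $\int r^{-1}|\nabla\omega|\,dA$: the factor of $r$ you pick up cancels the Jacobian instead of creating the $1/r$ prefactor, so your final claim in the third paragraph is backwards and the bound you would obtain is weaker by a full factor of $r$. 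This is fatal for the application: the lemma is used via \eqref{eq:angle-vort} on annuli $R_{k1}\le|z|\le(1-\delta)R_k$ where only $\int r|\nabla\omega_k|^2\le C_{\delta,\lambda}$ is available, and there $\int r^{-1}|\nabla\omega|\,dA$ is small by Cauchy--Schwarz while $\int|\nabla\omega|\,dA$ is not controlled. The same spurious $r$ infects the quadratic term: $\iint r|\omega||w_\theta|\,d\theta\,dr$ leads, even after your mean-zero/Poincar\'e correction, to $\int r|\nabla\mathbf{w}|^2\,dA$ rather than the Dirichlet integral. So the ``main obstacle'' you identify is real, but it is not the only one, and the fix you sketch does not remove the extra factor of $r$.

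The correct continuation (Gilbarg--Weinberger's) avoids the integration by parts in $\theta$ altogether. One first replaces the constants $\bar w_i$ by $w_i(r,\theta)$ inside the integral, at the cost of $\int_0^{2\pi}|\omega|\,|\mathbf{w}-\bar{\mathbf{w}}|\,d\theta\le\bigl(\int\omega^2\bigr)^{1/2}\bigl(\int|\mathbf{w}-\bar{\mathbf{w}}|^2\bigr)^{1/2}\le C\,r\int_0^{2\pi}|\nabla\mathbf{w}|^2\,d\theta$ by Wirtinger's inequality on $S_r$; this is where your Poincar\'e idea actually belongs, and it yields exactly $\int|\nabla\mathbf{w}|^2\,dA$ after the $r$-integration. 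The main term becomes $\int_0^{2\pi}\omega\,w_r\,d\theta$, and one then invokes the \emph{angular} component of the momentum identity, $\frac{1}{r}\partial_\theta\Phi=-\partial_r\omega+\omega w_r$, so that $\int_0^{2\pi}\omega w_r\,d\theta=\int_0^{2\pi}\partial_r\omega\,d\theta$ because $\partial_\theta\Phi$ integrates to zero over the circle. Since $|\partial_r\omega|\le|\nabla\omega|$ carries no factor of $r$, integration in $r$ gives $\iint|\nabla\omega|\,dr\,d\theta=\int r^{-1}|\nabla\omega|\,dA$, which is precisely the claimed weight. In short: you used the radial component of the equation (which produces $\partial_\theta\omega$, hence an unwanted $r$), whereas the proof requires the angular component (which produces $\partial_r\omega$).
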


The convergence of $\mathbf{w}_k$ to $\mathbf{w}_L$ and the basic properties of $\mathbf{w}_L$ are summarized in the following lemma. Note that the local uniform convergence is a standard fact, and the properties of $\mathbf{w}_L$ has been established through the papers \cite{GW, Amick, KPR2, KPR}.

\begin{lemma} \label{lem:wkwL}
$(\mathbf{w}_k, p_k)$ converge to $(\mathbf{w}_L, p_L)$ in $C^k$ norm on every bounded set for any $k \ge 0$. The $D$-solution $\mathbf{w}_L$ in exterior domain $\Omega$ is bounded, and both $\mathbf{w}_L$ and $p_L$ converge at infinity to some finite limits.
\end{lemma}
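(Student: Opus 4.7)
The lemma has two essentially independent parts: higher-order local convergence of the sequence $(\mathbf{w}_k,p_k)$ on compact subsets of $\Omega$, and the qualitative properties of $(\mathbf{w}_L,p_L)$ at infinity. I will treat them in turn.

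\emph{Step 1: higher-order local convergence.} Fix any compact $K\subset\Omega$. For all sufficiently large $k$ we have $K\subset\Omega_k\cap B_{(1-\delta)R_k}$ for some $\delta>0$ independent of $k$, so Lemma~\ref{lem:basic_regularity} provides uniform $L^\infty$ bounds on $\mathbf{w}_k$ and $p_k$ and a uniform $L^2$ bound on $\nabla p_k$ in a slightly larger neighbourhood $K'\Supset K$. View the system as a Stokes problem
\[
-\Delta\mathbf{w}_k+\nabla p_k=-(\mathbf{w}_k\cdot\nabla)\mathbf{w}_k,\qquad \div\mathbf{w}_k=0,
\]
whose right-hand side is uniformly bounded in $L^q(K')$ for some $q>2$ (since $\mathbf{w}_k$ is uniformly bounded and $\nabla\mathbf{w}_k$ is uniformly $L^2$). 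Interior Stokes regularity then upgrades $\mathbf{w}_k\in W^{2,q}_{\mathrm{loc}}$ with uniform estimates; by Sobolev embedding this gives uniform $C^{1,\alpha}$ bounds, after which a standard bootstrap yields uniform $C^{m,\alpha}(K)$ bounds on $\mathbf{w}_k$ and on $p_k$ (the latter normalised by subtracting a suitable constant depending on $k$) for every $m\ge 0$. Arzel\`a-Ascoli then gives subsequential $C^m(K)$ convergence; since the weak $W^{1,2}_{\mathrm{loc}}$ limit is uniquely identified with $(\mathbf{w}_L,p_L)$, the full sequence converges. Finally, the free constant in $p_k$ can be fixed once and for all, e.g.\ by prescribing $\int_{K_0}p_k=\int_{K_0}p_L$ on a fixed compact $K_0\subset\Omega$, making the pressure convergence canonical.

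\emph{Step 2: bounds and asymptotic limits for $\mathbf{w}_L$ and $p_L$.} Boundedness of $\mathbf{w}_L$ on the exterior domain $\Omega$ is the classical Gilbarg--Weinberger theorem: every planar $D$-solution of the stationary Navier-Stokes system in an exterior domain has bounded velocity (see \cite{GW1}--\cite{GW}). Existence of a constant vector $\mathbf{w}_0\in\mathbb{R}^2$ with $\mathbf{w}_L(z)\to\mathbf{w}_0$ as $|z|\to\infty$ is Amick's theorem \cite{Amick} under axial symmetry, and the Korobkov--Pileckas--Russo theorem \cite{KPR}--\cite{KPR20} in general; these results are quoted verbatim from the literature.

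\emph{Step 3: limit of the pressure.} Given the finite limit of $\mathbf{w}_L$, the convergence of $p_L$ at infinity is then an easy consequence of the averaging estimates of Section~\ref{sec:2}. Indeed, since $\int_{\Omega\cap\{|z|>R\}}|\nabla\mathbf{w}_L|^2\to 0$ as $R\to\infty$ by the $D$-bound, Lemma~\ref{lem:pressure} applied to growing annuli implies that the circular means $\bar p_L(r)$ form a Cauchy sequence, so they converge to some $p_\infty\in\mathbb{R}$. To pass from mean to pointwise convergence, I would control the oscillation of $p_L$ on each circle $S_r$ for large $r$ using the uniform $L^2$-bound on $\nabla p_L$ from Lemma~\ref{lem:basic_regularity}(3) (which, passing to the weak limit $k\to\infty$, survives as a bound on the full exterior domain) combined with the vanishing Dirichlet integral in the far field; standard trace/Poincar\'e inequalities on annular shells then give $\mathop{\mathrm{osc}}_{S_r}p_L\to 0$, whence $p_L(z)\to p_\infty$ uniformly as $|z|\to\infty$.

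\emph{Main obstacle.} No step is genuinely difficult given what we may cite: Step~1 is a routine Stokes bootstrap, and Steps~2--3 rely on the deep but already available results of \cite{GW1}--\cite{GW}, \cite{Amick} and \cite{KPR}--\cite{KPR20}. The only point that requires a bit of care is pinning down the normalisation of the pressure so that the convergence $p_k\to p_L$ and $p_L(z)\to p_\infty$ can both be stated unambiguously.
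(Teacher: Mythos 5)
Your proposal is correct and follows essentially the same route as the paper, which does not actually prove this lemma but simply notes that the local convergence is standard and attributes the boundedness and the limits at infinity of $\mathbf{w}_L$ and $p_L$ to the classical and recent literature \cite{GW1}--\cite{GW}, \cite{Amick}, \cite{KPR2}, \cite{KPR}; your Steps 2--3 invoke exactly these results, and Step 1 spells out the routine interior bootstrap that the paper takes for granted. One small slip in Step 1: with only a uniform $L^\infty$ bound on $\mathbf{w}_k$ and a uniform $L^2$ bound on $\nabla\mathbf{w}_k$, the right-hand side $(\mathbf{w}_k\cdot\nabla)\mathbf{w}_k$ is a priori uniformly bounded only in $L^2(K')$, not in $L^q$ with $q>2$, so you must first apply the interior Stokes estimate with $L^2$ data to obtain uniform $W^{2,2}_{\mathrm{loc}}$ bounds, after which the two-dimensional Sobolev embedding places $\nabla\mathbf{w}_k$ in every $L^q_{\mathrm{loc}}$ and the bootstrap proceeds exactly as you describe.
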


Denote by $\mathbf{w}_{0}$ the limit of the velocity~$\mathbf{w}_L$. Without loss of generality, below we always assume that the limit of the pressure is zero:
\begin{equation}\label{ler-limp}
\lim\limits_{|z|\to+\infty}\pl(z)=0.
\end{equation}
Then an immediate consequence of Lemma \ref{lem:wkwL} is:
\begin{corollary} \label{lem:Rk1}
There exist a sequence of radii $R_{k1}\to+\infty $ \ such that \ $\frac{R_k}{R_{k1}}\to+\infty$ and on the circle $S_{R_{k1}}$ we have
\begin{equation}\label{eq:wk-Rk1}
|\mathbf{w}_k(z) - \mathbf{w}_{0}| \le \epsilon_k\qquad\forall z\in S_{R_{k1}}, 
\end{equation}
and
\begin{equation}\label{eq:pk-Rk1}
|p_k(z) | \le \epsilon_k\qquad\forall z\in S_{R_{k1}}.
\end{equation}
\end{corollary}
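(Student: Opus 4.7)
The plan is to prove Corollary~\ref{lem:Rk1} by a standard diagonal extraction that combines two facts: the convergence of $\mathbf{w}_L$ and $p_L$ at spatial infinity (Lemma~\ref{lem:wkwL} together with the normalization~\eqref{ler-limp}), and the $C^0_{\loc}$--convergence of $(\mathbf{w}_k,p_k)$ to $(\mathbf{w}_L,p_L)$ on every fixed annulus.

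First, using the fact that $\mathbf{w}_L(z)\to\mathbf{w}_0$ and $p_L(z)\to 0$ as $|z|\to\infty$, I would choose an increasing auxiliary sequence $\rho_j\to+\infty$ such that
\begin{equation*}
\sup_{|z|\ge \rho_j}\bigl(|\mathbf{w}_L(z)-\mathbf{w}_0|+|p_L(z)|\bigr)\le \tfrac{1}{j}.
\end{equation*}
Then, since by Lemma~\ref{lem:wkwL} the sequence $(\mathbf{w}_k,p_k)$ converges to $(\mathbf{w}_L,p_L)$ uniformly on the compact circle $S_{\rho_j}$, for each $j$ there exists an index $K_j$ such that for all $k\ge K_j$
\begin{equation*}
\sup_{z\in S_{\rho_j}}\bigl(|\mathbf{w}_k(z)-\mathbf{w}_L(z)|+|p_k(z)-p_L(z)|\bigr)\le \tfrac{1}{j}.
\end{equation*}
Because $R_k\to\infty$, by enlarging $K_j$ if necessary (and assuming $K_j<K_{j+1}$), I can also arrange $R_k\ge j\cdot \rho_j$ for every $k\ge K_j$, which in particular keeps $S_{\rho_j}$ inside the domain where the estimates of Lemma~\ref{lem:basic_regularity} and the local convergence apply.

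Finally, I would define $R_{k1}:=\rho_j$ for $K_j\le k<K_{j+1}$. By the triangle inequality, on the circle $S_{R_{k1}}$ both $|\mathbf{w}_k-\mathbf{w}_0|$ and $|p_k|$ are bounded by $2/j$, hence by a sequence $\epsilon_k\to 0$, giving~\eqref{eq:wk-Rk1}--\eqref{eq:pk-Rk1}. The construction also yields $R_{k1}\to\infty$ and $R_k/R_{k1}\ge j\to\infty$, as required.

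I do not expect a genuine obstacle here; the argument is a routine diagonal procedure. The only point that requires a little care is ensuring simultaneously that $R_{k1}\to\infty$ and $R_k/R_{k1}\to\infty$, which is handled by choosing the thresholds $K_j$ large enough relative to $\rho_j$ so that $R_k$ is forced to grow much faster than the selected radii $\rho_j$.
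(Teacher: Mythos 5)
Your diagonal extraction is correct and is exactly the routine argument the paper has in mind: it presents Corollary~\ref{lem:Rk1} as an immediate consequence of Lemma~\ref{lem:wkwL} together with the normalization~\eqref{ler-limp}, without writing out the details. Your handling of the condition $R_k/R_{k1}\to+\infty$ via the choice $R_k\ge j\rho_j$ for $k\ge K_j$ is the right touch and closes the only point that needed care.
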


In order to apply Lemma~\ref{lem:angle} , we need also the following simple observation: 
for any fixed $\delta\in (0,1)$ the uniform convergence 
\begin{equation}\label{eq:angle-vort}
\left(\,\int\limits_{\Omega_{R_{k1},(1-\delta)R_k}}\frac{1}{r}|\nabla \omega|\right)\to0\qquad \mbox{ as }k\to \infty
\end{equation}
holds.  This asymptotic estimate follows immediately from Lemma~\ref{lem:basic_regularity}${}_2$ and from the classical H\"older inequality. 

\section{On solutions to Euler equations}
\label{sec:3}

In this section we consider some properties of weak solutions $\ve\in W^{1,2}(B_1)$ to the Euler system
\begin{equation}
\label{Eul}\left\{\begin{array}{rcl} \big({\bf
v}\cdot\nabla\big){\bf v}+\nabla p & = & 0 \qquad \ \ \
\hbox{\rm in }\;\;B_1,\\[4pt]
\div{\bf v} & = & 0\qquad \ \ \ \hbox{\rm in }\;\;B_1,
\\[4pt]
{\bf v} &  = & \ee_1\ \
 \qquad\  \hbox{\rm on }\;\;S_1=\partial B_1,
\end{array}\right.
\end{equation}
where, recall, $B_1=\{z\in\R^2:|z|<1\}$ is the unit disk and $\ee_1=(1,0)$ is the unit vector. Suppose that 
\begin{equation}
\label{de}\int\limits_{B_1}|\nabla\ve|^2<\e^2
\end{equation}
for some constant $\e\in(0,1)$. Then from the first equation~(\ref{Eul}${}_1$) one can assume that $|p|\sim\e$. 
Nevertheless, surprisingly much better estimate holds (which plays the crucial role in the subsequent analysis). 

\begin{theorem} \label{th:euler_est}
{\sl Let $\ve\in W^{1,2}(B_1)$ satisfy the estimate~(\ref{de}), and let 
$(\ve,p)$ satisfy the Euler system~(\ref{Eul}${}_1$) for almost all $z\in B_1$. Then $p\in W^{2,1}(B_1) \subset C(\bar{B}_1)$, moreover,
\begin{equation}
\label{de-e}\sup\limits_{z_1,z_2\in \bar B_1}|p(z_1)-p(z_2)|\le C\,\e^2,
\end{equation}
where $C$ is some universal constant (does not depend on $\e,\ve,p$\,). }
\end{theorem}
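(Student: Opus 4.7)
The plan is to reduce~(\ref{de-e}) to Wente's inequality for planar elliptic equations with Jacobian source. Setting $\u:=\ve-\ee_1$, we have $\u\in W^{1,2}(B_1)$ with $\u|_{S_1}=\0$, $\div\u=0$, and $\|\nabla\u\|_{L^2}<\e$. Substituting $\ve=\ee_1+\u$ into~(\ref{Eul}${}_1$) and using that $\ee_1$ is constant gives $\nabla p=-\partial_1\u-(\u\cdot\nabla)\u$. Taking the divergence and invoking $\div\u=0$ twice produces $\Delta p=-\partial_iu_j\,\partial_ju_i$; in two dimensions, the constraint $\partial_1u_1+\partial_2u_2=0$ collapses this quadratic form into twice a Jacobian:
\[
\Delta p=2\det(\nabla\u)=2(\partial_1u_1\,\partial_2u_2-\partial_2u_1\,\partial_1u_2).
\]
This is precisely the structural form needed for Wente's theorem.

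Next I identify the boundary data for $p$. Since $\ve|_{S_1}=\ee_1$ is constant along $S_1$, its tangential derivative vanishes there; writing $\partial_1=\cos\th\,\partial_r-\sin\th\,\partial_\th$ gives $\partial_1\ve|_{S_1}=\cos\th\,\partial_r\ve|_{S_1}$. The polar form of $\div\ve=0$, namely $\partial_rv_r=-v_r-\partial_\th v_\th$, together with $v_r|_{S_1}=\cos\th$ and $v_\th|_{S_1}=-\sin\th$, yields $\partial_rv_r|_{S_1}=0$. Projecting $\nabla p=-\partial_1\ve|_{S_1}$ onto the outward normal therefore produces the homogeneous Neumann condition $\partial_n p|_{S_1}=0$.

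With $\Delta p=2\det(\nabla\u)$ in $B_1$ and $\partial_n p|_{S_1}=0$, estimate~(\ref{de-e}) follows from the Neumann version of Wente's inequality. By the Coifman--Lions--Meyer--Semmes theorem, $\det(\nabla\u)$ (extended by zero outside $B_1$) lies in the Hardy space $\mathcal{H}^1(\R^2)$ with $\|\det(\nabla\u)\|_{\mathcal{H}^1}\le C\|\nabla u_1\|_{L^2}\|\nabla u_2\|_{L^2}\le C\e^2$. A Wente-type estimate for the Neumann Laplacian on $B_1$ then delivers $p\in W^{2,1}(B_1)\cap C(\bar B_1)$ with $\|p-\bar p\|_{L^\infty}\le C\e^2$, where $\bar p$ is the mean of $p$ over $B_1$; this is~(\ref{de-e}).

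The principal technical subtlety, I expect, is justifying the Neumann condition $\partial_n p|_{S_1}=0$ for weak $W^{1,2}$ solutions, as the trace of $\partial_rv_r$ is not classically defined. A safer route is to extend $\u$ by zero to $\R^2$, apply the classical Dirichlet--Wente estimate to the Newtonian potential of $2\det(\nabla\u)$ to obtain a continuous function $P$ with $\|P\|_\infty\le C\e^2$ and $\Delta P=2\det(\nabla\u)$, and then control the oscillation of the harmonic remainder $p-P$ on $S_1$ by invoking the Bernoulli identity $\nabla(p+|\ve|^2/2)=\omega\ve^\perp$ together with the fact that $|\ve|^2=1$ on $S_1$.
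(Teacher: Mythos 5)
Your main line is essentially the paper's own proof: write $\Delta p$ as a div-curl/Jacobian quantity lying in the Hardy space with norm $\le C\e^2$ (Coifman--Lions--Meyer--Semmes), identify the homogeneous Neumann condition $\partial_n p=0$ on $S_1$, and conclude from the Chang--Krantz--Stein estimate $\|\nabla^2p\|_{L^1(B_1)}\le C\|\Delta p\|_{\mathcal H^1_z(B_1)}$ together with the embedding $W^{2,1}(B_1)\subset C(\bar B_1)$ --- this last combination is exactly the ``Neumann version of Wente's inequality'' you invoke. Your formal derivation of $\partial_np|_{S_1}=0$ is correct and equivalent to the paper's computation $\partial_np=(\partial_1\ve)\cdot\n=\partial_s v_2=0$.

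The one genuine gap is precisely the point you flag, and your proposed patch does not close it. The paper justifies the weak Neumann condition (Lemmas~\ref{lem:boundary-Eul} and~\ref{lem:neumann}) by a good-circle argument: the one-dimensional Hardy inequality gives $\int_{B_1\setminus B_{1-h}}|\ve-\ee_1|\,|\nabla\ve|=o(h)$, producing a set $\HI$ of radii of full density at $1$ on which $\sup_{S_r}|\ve-\ee_1|\to0$ and $\int_{S_r}|\ve-\ee_1|\,|\nabla\ve|\,ds\to0$; one then passes to the limit in $\int_{S_{1-\delta}}\varphi\,\partial_np\,ds$ after splitting $(\ve\cdot\nabla)\ve=(\ee_1\cdot\nabla)\ve+((\ve-\ee_1)\cdot\nabla)\ve$ and integrating the first term by parts tangentially. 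Your ``safer route'' via the Bernoulli identity fails for two reasons. First, $\omega\in L^2(B_1)$ has no trace on $S_1$, so $\partial_s\Phi=\omega\,\ve^\perp\cdot\s$ can only be exploited on interior circles, where $|\ve|^2\ne1$ and the defect $\sup_{S_r}\bigl||\ve|^2-1\bigr|$ is merely $o(1)$ as $r\to1$, with no rate in terms of $\e$. Second, and decisively, even on a well-chosen circle Cauchy--Schwarz and $\|\omega\|_{L^2(B_1)}\le\sqrt2\,\e$ only yield $\int_{S_r}|\omega|\,ds\le C\e$, so the oscillation of $\Phi$ (hence of $p$) on that circle is bounded by $C\e$ --- exactly the naive linear bound that the theorem is designed to improve to $C\e^2$. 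The quadratic gain comes from the Hardy-space structure of $\Delta p$ combined with the Neumann condition, so the weak Neumann condition cannot be bypassed; the Hardy-inequality/good-circle argument (or an equivalent) is needed to make it rigorous.
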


The further proof splits into three steps, which will be written below as some separate lemmas. 

\begin{lemma} \label{lem:boundary-Eul}
\label{reg-2} {\sl Under assumptions of Theorem~\ref{th:euler_est} there exists a measurable set $\HI\subset(\frac12,1)$
such that
\begin{enumerate}
\item[(i)] \ density of $\HI$ at one equals~1:
\begin{equation}\label{md1} \frac{\meas\bigl(\HI\cap[1-h,1]\,\bigr)}{h}\to 1\qquad\mbox{ as \ }h\to0+;
\end{equation}
\item[(ii)] \qquad $\lim\limits_{\HI\ni r\to1-}\int\limits_{S_r}|\ve(z)-\ee_1|\cdot|\nabla\ve|\,ds=0,$
\item[(iii)] \qquad $\lim\limits_{\HI\ni r\to1-}\sup\limits_{z\in S_r}\bigl|\ve(z)-\ee_1\bigr|=0.$
\end{enumerate}
}
\end{lemma}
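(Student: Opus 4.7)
The plan is to prove all three claims simultaneously by reducing them to a single condition: that the function $h(r):=(1-r)\,g(r)$, where $g(r):=\int_{S_r}|\nabla\mathbf{v}|^2\,ds$, tends to zero along $\mathscr{F}$. The Euler equation itself is not actually used; only $\mathbf{v}\in W^{1,2}(B_1)$, the trace value $\mathbf{v}|_{S_1}=\mathbf{e}_1$, and the smallness (\ref{de}) enter the proof.

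First I would set $\mathbf{u}:=\mathbf{v}-\mathbf{e}_1\in W^{1,2}_0(B_1)$. For a.e.~$\theta$, $\mathbf{u}(\cdot,\theta)$ is absolutely continuous on $(1/2,1]$ and vanishes at $r=1$, so Cauchy--Schwarz in the radial variable yields $|\mathbf{u}(r,\theta)|^2\le(1-r)\int_r^1|\partial_\rho\mathbf{u}|^2\,d\rho$. Integrating in $\theta$ and invoking Fubini together with (\ref{de}) produces the pointwise circle bound
\begin{equation*}
F(r):=\int_{S_r}|\mathbf{u}|^2\,ds\le C(1-r)\varepsilon^2.
\end{equation*}
Separately, $g\in L^1(1/2,1)$ with $\int g\le\varepsilon^2$, and the crucial observation is that
\begin{equation*}
\frac{1}{\delta}\int_{1-\delta}^1 h(r)\,dr\le\int_{1-\delta}^1 g(r)\,dr\longrightarrow 0\quad\text{as }\delta\to 0^+.
\end{equation*}
A standard dyadic Chebyshev argument --- take $\delta_n=2^{-n}$, set $\phi(\delta)=\tfrac{1}{\delta}\int_{1-\delta}^1 h$, and on each annulus $[1-\delta_n,1-\delta_{n+1}]$ excise the set $\{h>\sqrt{\phi(\delta_n)}\}$, whose measure is at most $\delta_n\sqrt{\phi(\delta_n)}=o(\delta_n)$ --- then produces $\mathscr{F}$ of density $1$ at $r=1$ along which $h(r)\to 0$. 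After removing a null set we may also assume $\mathbf{u}|_{S_r}\in W^{1,2}(S_r)$ for every $r\in\mathscr{F}$.

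With $\mathscr{F}$ in hand, (ii) follows immediately from Cauchy--Schwarz on $S_r$:
\begin{equation*}
\int_{S_r}|\mathbf{u}|\,|\nabla\mathbf{v}|\,ds\le\sqrt{F(r)g(r)}\le\varepsilon\sqrt{h(r)}\longrightarrow 0.
\end{equation*}
For (iii), a one-dimensional Sobolev-type estimate on $S_r$ (bounding $\sup|\mathbf{u}|^2$ by the average plus twice $\int_0^{2\pi}|\mathbf{u}|\,|\partial_\theta\mathbf{u}|\,d\theta$) gives
\begin{equation*}
\sup_{S_r}|\mathbf{u}|^2\le \frac{F(r)}{2\pi r}+2\sqrt{F(r)\,g(r)}\le C(1-r)\varepsilon^2+C\varepsilon\sqrt{h(r)},
\end{equation*}
with each summand vanishing as $\mathscr{F}\ni r\to 1$. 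The only step requiring some care is the measure-theoretic construction of $\mathscr{F}$; however, this is a routine Lebesgue-density argument and I do not expect any genuine obstacle.
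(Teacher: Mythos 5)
Your proposal is correct and follows essentially the same route as the paper: a Hardy-type radial estimate for $\ve-\ee_1$ (which vanishes on $S_1$), a Chebyshev/density extraction of a good set $\HI$ of full density at $r=1$, and Cauchy--Schwarz on the circles $S_r$ to get (ii) and (iii). The only cosmetic differences are that you run the density argument on $(1-r)\int_{S_r}|\nabla\ve|^2\,ds$ and use a pointwise-in-$r$ circle bound plus a one-dimensional Sobolev inequality for (iii), whereas the paper extracts $\HI$ directly from the annular estimate $\int_{B_1\setminus B_{1-h}}|\ve-\ee_1|\,|\nabla\ve|=o(h)$ and invokes trace continuity; both versions are sound.
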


\begin{proof} By the one-dimensional Hardy's inequality, we have
\begin{equation} \label{eq:hardyineq}
\int_{B_1 \setminus B_\frac12} \frac{|{\ve} - \ee_1|^2}{(1-r)^2}  \le C \int_{B_1} |\partial_r {\ve}|^2 < +\infty,
\end{equation} 
where $r = \sqrt{x^2 + y^2}$. Therefore, for $h\in(0,1)$
\begin{equation} \label{eq:hardyineq1}
\int_{B_1 \setminus B_{1-h}}|{\ve} - \ee_1|^2 = o(h^2),
\end{equation}
and  by H\"{o}lder inequality,
$$\int_{B_1 \setminus B_{1-h}}|{\ve} - \ee_1|\cdot|\nabla\ve|=o(h).$$
The last estimate implies easily that there exists a measurable set $\HI\subset(\frac12,1)$ such that
the density of $\HI$ at one equals~1 (e.g., (\ref{md1}) holds\,), the restriction $\ve|_{S_r}$ is an absolutely continuous function of one variable for all~$r\in\HI$,  and
 \begin{equation}\label{md-a1}\lim\limits_{\HI\ni r\to1-}\int\limits_{{S}_r}|{\ve} - \ee_1|\cdot|\nabla\ve|\,ds=0.
\end{equation}
In particular,  \begin{equation}\label{md-a1-121}\lim\limits_{\HI\ni r\to1-}\int\limits_{{S}_r}\biggl|\nabla|{\ve} - \ee_1|^2\biggr|\,ds=0.
\end{equation}
From the last assertion, by virtue of the boundary condition~ (\ref{Eul}${}_3$) and from the continuity of the trace operator, we obtain immediately that 
 \begin{equation}\label{md-a2}\lim\limits_{\HI\ni r\to1-}\max\limits_{z\in{S}_r}|{\ve}(z) - \ee_1|=0.
\end{equation}
The Lemma is proved.\end{proof}

Below  in this section we assume that all the assumptions of Theorem~\ref{th:euler_est} are fulfilled. 
Taking divergence on the first equation in \eqref{Eul} gives
\begin{equation}
\Delta p = - \nabla {\ve} \cdot (\nabla {\ve})^\intercal
\end{equation}
We can extend ${\ve}$ outside $B_1$ by the constant vector $\ee_1$ so that it is globally defined and divergence free in $\mathbb{R}^2$. By the classical div-curl lemma (see, e.g., \cite{CLMS}), \,$\nabla {\ve} \cdot (\nabla {\ve})^\intercal$ belongs to the Hardy space $\mathcal{H}^1(\mathbb{R}^2)$. Moreover, $\nabla {\ve} \cdot (\nabla {\ve})^\intercal$ vanishes outside $B_1$, so that it belongs to the space $\mathcal{H}^1_z(B_1)$. Here $\mathcal{H}^1_z(B_1)$ is defined, for example, in \cite[Page 287]{CKS}, to be the space of functions on $B_1$ whose extension by 0 to whole space lies in $\mathcal{H}^1(\mathbb{R}^2)$. Using the classical theory of singular integrals and standard theory of harmonic functions, one can show that $p \in W^{2,1}_{loc}(B_1)$, see, for example, \cite[Lemma 2.11]{KPR13}. By the Euler equation and the $W^{1,2}$-regularity of ${\ve}$, we also know that $p \in W^{1,s}(B_1), s<2$.  

\begin{lemma} \label{lem:neumann}
The pressure $p$ satisfies the homogeneous Neumann boundary condition on $S_1$ in the weak sense, that is, for any $\varphi \in C^\infty(\bar{B}_1)$, we have
\begin{equation} \label{eq:neumann}
\int_{B_1} \nabla p \cdot \nabla \varphi = - \int_{B_1} (\Delta p) \varphi
\end{equation}
\end{lemma}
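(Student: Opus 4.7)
The plan is to obtain the weak Neumann identity by integrating by parts on sub-discs $B_r$ and passing to the limit along ``good radii'' $r\in\HI$ from Lemma~\ref{lem:boundary-Eul}. Fix $\varphi\in C^\infty(\bar B_1)$. Since $\nabla p\in L^s(B_1)$ (from $|\nabla p|\lesssim|\ve||\nabla\ve|$ with $\ve\in L^q$ for every $q<\infty$ by Sobolev embedding) and $\Delta p\in L^1(B_1)$, dominated convergence reduces the assertion to showing that the boundary flux in Green's identity
\begin{equation*}
\int_{B_r}\nabla p\cdot\nabla\varphi + \int_{B_r}(\Delta p)\varphi = \int_{S_r}(\nabla p\cdot\mathbf{n})\varphi\,ds
\end{equation*}
vanishes along some sequence $r_n\to 1^-$ with $r_n\in\HI$.

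The key manipulation is to move derivatives off $\ve$ using the divergence-free structure. Substituting $\nabla p=-(\ve\cdot\nabla)\ve$ and the identity $v_j\partial_j v_i=\partial_j(v_iv_j)$, one integration by parts yields
\begin{equation*}
\int_{B_r}\nabla p\cdot\nabla\varphi = \int_{B_r} v_iv_j\,\partial_i\partial_j\varphi - \int_{S_r}(\ve\cdot\mathbf{n})(\ve\cdot\nabla\varphi)\,ds.
\end{equation*}
Letting $r\to 1$ along $\HI$, Lemma~\ref{lem:boundary-Eul}(iii) provides the uniform limit $\ve|_{S_r}\to\ee_1$, which gives $\int_{S_r}(\ve\cdot\mathbf{n})(\ve\cdot\nabla\varphi)\,ds\to\int_{S_1} n_1\partial_1\varphi\,ds = \int_{B_1}\partial_1^2\varphi$ (by the divergence theorem), while the volume integral converges by dominated convergence. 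Setting $f_{ij}:=v_iv_j-\delta_{i1}\delta_{j1}$, this becomes
\begin{equation*}
\int_{B_1}\nabla p\cdot\nabla\varphi = \int_{B_1} f_{ij}\,\partial_i\partial_j\varphi.
\end{equation*}

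To produce the matching expression for $-\int_{B_1}(\Delta p)\varphi$, I use the distributional identity $\Delta p=-\partial_i\partial_j(v_iv_j)=-\partial_i\partial_j f_{ij}$ in $B_1$. The boundary condition $\ve|_{S_1}=\ee_1$ forces $f_{ij}|_{S_1}=0$ in the trace sense, so its zero-extension to $\R^2$ lies in $W^{1,1}(\R^2)$ with compact support in $\bar B_1$. Extending $\varphi$ to any $\tilde\varphi\in C^\infty_c(\R^2)$, a double integration by parts over $\R^2$ then produces $-\int_{B_1}(\Delta p)\varphi=\int_{B_1} f_{ij}\,\partial_i\partial_j\varphi$, matching the display above and completing the proof.

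The main obstacle is this last integration by parts: one must verify that the distribution $\partial_i\partial_j f_{ij}$ on $\R^2$ carries no spurious boundary-measure on $S_1$, so that it coincides with the $L^1$-function $-\Delta p$ on $B_1$. This reduces to showing that the trace $n_i\,v_j\partial_j v_i|_{S_1}$ vanishes; since $\ve|_{S_1}=\ee_1$, it equals $n_i\partial_1 v_i|_{S_1}$, and using $\partial_\tau\ve|_{S_1}=0$ (tangential derivative of a constant) together with the divergence-free relation $\partial_n v_n+v_n+\partial_\tau v_\tau=0$ on $S_1$ (curvature $1$, with $v_n=\cos\theta$, $v_\tau=-\sin\theta$) one formally obtains $\partial_n v_n|_{S_1}=0$ and hence $n_i\partial_1 v_i|_{S_1}=0$. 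Making this boundary identity precise for $\ve\in W^{1,2}(B_1)$ is the delicate point, and relies essentially on the interplay of the boundary condition, divergence-freeness, and the curvature of $S_1$.
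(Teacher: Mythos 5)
Your setup (Green's identity on $B_r$ and passage to the limit along good radii $r\in\HI$) matches the paper's strategy, and your intermediate identity $\int_{B_1}\nabla p\cdot\nabla\varphi=\int_{B_1}f_{ij}\,\partial_i\partial_j\varphi$ is correctly derived from Lemma~\ref{lem:boundary-Eul}(iii). But the argument does not close. The final step --- identifying $\int_{B_1}f_{ij}\,\partial_i\partial_j\varphi$ with $-\int_{B_1}(\Delta p)\varphi$ --- is exactly the assertion that the weak normal trace of $(\ve\cdot\nabla)\ve=\partial_j(v_iv_j)$ vanishes on $S_1$; that is the entire content of the lemma, and you leave it at a ``formal'' computation. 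The $\R^2$-extension device does not help: since $\tilde f_{ij}\in W^{1,1}(\R^2)$ with zero trace, the double integration by parts over $\R^2$ only yields
\begin{equation*}
\int_{\R^2}\tilde f_{ij}\,\partial_i\partial_j\tilde\varphi=-\int_{B_1}(v_j\partial_j v_i)\,\partial_i\varphi=\int_{B_1}\nabla p\cdot\nabla\varphi,
\end{equation*}
i.e.\ it returns you to your first display and is circular. To reach $-\int_{B_1}(\Delta p)\varphi$ one must integrate by parts once more, and $v_j\partial_j v_i$ is merely an $L^{2-\epsilon}$ field whose divergence inside $B_1$ is $-\Delta p\in L^1$; a priori its distributional divergence across $S_1$ can carry a measure supported on $S_1$. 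The concluding polar-coordinate computation presupposes pointwise traces of $\nabla\ve$ on $S_1$, which do not exist for $\ve\in W^{1,2}(B_1)$, so the ``delicate point'' you flag is a genuine gap, not a technicality.

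The paper fills precisely this gap, and you already have the tools to do the same. On an inner circle $S_{1-\delta}$ (where $\nabla\ve$ is defined for a.e.\ $\delta$) split
\begin{equation*}
[(\ve\cdot\nabla)\ve]\cdot\mathbf n=[(\ee_1\cdot\nabla)\ve]\cdot\mathbf n+[((\ve-\ee_1)\cdot\nabla)\ve]\cdot\mathbf n .
\end{equation*}
Using $\div\ve=0$, the first term equals $\pm\,\partial_s v_2$, a tangential derivative along the \emph{closed} curve $S_{1-\delta}$, so $\int_{S_{1-\delta}}\varphi\,\partial_s v_2\,ds=-\int_{S_{1-\delta}}v_2\,\partial_s\varphi\,ds\to0$ because $v_2\to0$ near $S_1$ (no trace of $\nabla\ve$ on $S_1$ is ever taken). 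The second term is bounded by $\|\varphi\|_{\infty}\int_{S_{1-\delta}}|\ve-\ee_1|\,|\nabla\ve|\,ds$, which tends to $0$ along $\HI$ by Lemma~\ref{lem:boundary-Eul}(ii) --- the item of that lemma your argument never invokes. Substituting this into your reduction ``show the boundary flux vanishes along a sequence $r_n\in\HI$'' completes the proof without the $f_{ij}$ detour.
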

\begin{proof}
If ${\ve}, p$ are smooth functions in $\bar{B}_1$, then \eqref{eq:neumann} can be directly checked. Indeed, on $S_1$, there holds pointwisely
\begin{align*}
\partial_n p &= [({\ve} \cdot \nabla) {\ve}] \cdot {\mathbf n}  = (\partial_1 {\ve}) \cdot {\mathbf n} \\
& = (-\partial_2 v_2, \partial_1 v_2) \cdot {\mathbf n} \\
& = \partial_s v_2 = 0,
\end{align*}
where in the second line we have used the divergence free condition on ${\ve}$  \ (here $\partial_s$ means the tangent derivative along the circle: \  $\mathbf{s} = \mathbf{n}^\perp$).

Since we are dealing with Sobolev functions, a simple approximation argument is necessary. Now fix $\varphi \in C^\infty(\bar{B}_1)$ and consider the approximate boundary term
\begin{align}
\label{ex:tside}\int_{S_{1-\delta}} \varphi \partial_n p \ ds &= \int_{S_{1-\delta}} \varphi [({\ve} \cdot \nabla) {\ve}] \cdot\mathbf{n} \ ds \\
& =  \int_{S_{1-\delta}} \varphi [(\ee_1 \cdot \nabla) {\ve}] \cdot\mathbf{n} \ ds +   \int_{S_{1-\delta}} 
\varphi [([{\ve}-\ee_1] \cdot \nabla) {\ve}] \cdot\mathbf{n} \ ds.
\end{align}
For almost all $0<\delta<\frac12$, the gradient $\nabla\ve$ is well defined on the circle~$S_{1-\delta}$. It suffices to prove that for some sequence of $\delta \to 0$, both integrals on the right hand side converge to 0. For the first integral, we have 
\begin{align*}
\int_{S_{1-\delta}} \varphi  [(\ee_1 \cdot \nabla) {\ve}] \cdot\mathbf{n} \ ds = \int_{S_{1-\delta}} \varphi \partial_s v_2\,ds= - \int_{S_{1-\delta}} v_2\partial_s \varphi\,ds\to 0,
\end{align*}
as $\delta \to 0$, since $v_2 \equiv 0$ on $S_1$. The second integral goes to zero when $\HI\ni(1-\delta)\to1-$
because of Lemma~\ref{lem:boundary-Eul}~(iii). Thus it is easy to show that there exists a sequence of $\delta \to 0$ such that the above right hand side of~(\ref{ex:tside}) goes to 0.
\end{proof}

\begin{lemma} \label{lem:pEEEE} Under assumptions of Theorem~\ref{th:euler_est} the inclusion
$p \in W^{2,1}(B_1) \subset C(\bar{B}_1)$ holds, moreover, 
\begin{equation} \label{eq:prest2}
 \|p - \dashint_{B_1} p\|_{C(\bar{B}_1)} \le C\e^2.
 \end{equation}
\end{lemma}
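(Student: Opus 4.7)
The plan is to represent $p$ via the Neumann Green's function for $-\Delta$ on the unit disk and then exploit the Hardy-space structure of the right-hand side $f := -\nabla\ve\cdot(\nabla\ve)^\intercal$ through Fefferman--Stein duality. The material preceding this lemma already supplies the two essential inputs: first, $\Delta p = f$ with $f\in\mathcal{H}^1_z(B_1)$ and $\|f\|_{\mathcal{H}^1_z(B_1)}\le C\|\nabla\ve\|_{L^2(B_1)}^2\le C\e^2$; second, Lemma~\ref{lem:neumann} gives the homogeneous Neumann boundary condition on $S_1$ in the weak sense~\eqref{eq:neumann}. These are precisely the ingredients needed to pair $p$ against a Green's function whose BMO norm is under control.

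Concretely, I would introduce the Neumann Green's function $G_N(z,w)$ for $-\Delta$ on $B_1$, normalized by $\int_{B_1}G_N(z,w)\,dw=0$. By the method of images it admits the explicit form
\[ G_N(z,w)=-\frac{1}{2\pi}\Bigl(\log|z-w|+\log|1-z\bar w|\Bigr)+h(z,w), \]
with $h$ smooth on $\bar B_1\times\bar B_1$. Pairing $\Delta p=f$ against $G_N(z,\cdot)$ and using the weak Neumann condition should then yield, for every $z\in B_1$, the representation formula
\[ p(z)-\dashint_{B_1}p\,dw \;=\; \int_{B_1}G_N(z,w)f(w)\,dw. \]
Since $\log|\cdot|$ is the canonical BMO function and the reflection and regular pieces of $G_N$ are uniformly bounded, the map $w\mapsto G_N(z,w)$ lies in $BMO(B_1)$ with norm bounded independently of $z\in\bar B_1$. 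Invoking the $\mathcal{H}^1_z(B_1)$--$BMO(B_1)$ duality from~\cite{CKS} then delivers
\[ \Bigl|\,p(z)-\dashint_{B_1}p\,\Bigr| \;\le\; C\,\|G_N(z,\cdot)\|_{BMO(B_1)}\,\|f\|_{\mathcal{H}^1_z(B_1)}\;\le\; C\e^2, \]
uniformly in $z\in\bar B_1$, which is exactly the estimate~\eqref{eq:prest2}. The $W^{2,1}(B_1)$ regularity claim then follows from Calder\'on--Zygmund estimates for the Neumann problem with Hardy-space data (giving $\nabla^2 p\in L^1$), combined with the interior $W^{2,1}_{\loc}$ information already noted earlier in this section.

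The principal technical obstacle is rigorously justifying the representation formula, since a priori $p$ has only the modest regularity $p\in W^{1,s}(B_1)$, $s<2$, and its Neumann boundary condition is known solely in the weak form~\eqref{eq:neumann}. I would handle this by approximating the test object $w\mapsto G_N(z,w)$ by smooth $\varphi_\eta$ satisfying $\partial_n\varphi_\eta=0$ on $S_1$, applying~\eqref{eq:neumann} with $\varphi=\varphi_\eta$, and passing to the limit using $G_N(z,\cdot)\in W^{1,q}(B_1)$ for all $q<2$ together with $p\in W^{1,s}(B_1)$ and $f\in L^1(B_1)$. A secondary point worth noting is that, because $f$ extended by zero lies in $\mathcal{H}^1(\R^2)$ while $G_N(z,\cdot)$ extends (via the image point) to a function whose restriction to any disk of radius ${<}2$ lies in $BMO$, the duality step can alternatively be carried out on $\R^2$, bypassing any subtlety in the intrinsic domain Hardy--BMO pairing.
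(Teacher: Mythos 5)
Your proposal is correct and shares the paper's overall skeleton (reduce to the Neumann problem $\Delta p=-\nabla\ve\cdot(\nabla\ve)^\intercal\in\mathcal{H}^1_z(B_1)$ via Lemma~\ref{lem:neumann}, then represent $p$ through the Neumann Green's function $G_N$), but you obtain the $C(\bar B_1)$ bound by a genuinely different mechanism. The paper first proves $\|\nabla p\|_{L^1(B_1)}\le C\e^2$ by Young's inequality using only the kernel bound $|\partial_z G_N|\le C|z-z'|^{-1}$, then invokes \cite[Theorem 6.2]{CKS} to get $\|\nabla^2 p\|_{L^1(B_1)}\le C\|\Delta p\|_{\mathcal{H}^1_z(B_1)}\le C\e^2$, and concludes via the Sobolev embedding $W^{2,1}(B_1)\subset C(\bar B_1)$ (with the normalization handled as in \cite[Lemma~2.1]{BKK}). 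You instead test the source directly against $G_N(z,\cdot)$ and use the explicit image formula $\log|1-z\bar w|=\log|z|+\log|z^*-w|$ to see that $\|G_N(z,\cdot)\|_{BMO}$ is bounded uniformly in $z\in\bar B_1$ (the constant $\log|z|$ is harmless since the zero-extension of the source has vanishing mean), so Fefferman--Stein duality gives the pointwise bound $|p(z)-\dashint_{B_1}p|\le C\e^2$ in one step. What your route buys is that the sup estimate no longer passes through the boundary $W^{2,1}$ theory of \cite{CKS}; what it costs is the need to justify that the $\mathcal{H}^1$--$BMO$ pairing coincides with the absolutely convergent integral $\int G_N(z,w)f(w)\,dw$ (which does hold here, essentially because the logarithmic potential of a compactly supported $\mathcal{H}^1(\R^2)$ function is continuous — the same fact the paper uses for $p_{1k}$ in Corollary~\ref{lem:Rk3}), together with the approximation argument for the representation formula that you correctly flag. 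Note that the inclusion $p\in W^{2,1}(B_1)$ is part of the statement, so you cannot dispense with \cite[Theorem 6.2]{CKS} (or an equivalent global $W^{2,1}$ estimate for the Neumann problem with $\mathcal{H}^1_z$ data) entirely; your closing sentence on this point is the same argument as the paper's and should be kept.
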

\begin{proof}
By Lemma \ref{lem:neumann} and the discussion before it, $p$ is the  solution (unique in $W^{1,s}(B_1)$) to the problem
\begin{equation*}
\begin{cases}
\Delta p = -  \nabla {\ve} \cdot (\nabla {\ve})^\intercal \in \mathcal{H}^1_z(B_1) \mathrm{\ \ in \ } B_1, \\
\partial_n p = 0 \mathrm{\ \ on \ } S_1.
\end{cases}
\end{equation*} 
Let $G_N$ be the Green's function in $B_1$ with Neumann boundary conditions, then we have 
$$p(z) = \int G_N(z,z') \Delta p(z')  \ dz' +C.$$ 
Since $|\partial _z G_N(z,z')|\le \frac{C}{|z-z'|}$, by the classical   Young inequality for convolutions (see, e.g., \cite[Section~4.3]{Lieb}) we obtain 
$$\|\nabla p\|_{L^q(B_1)}\le C_q\,\e^2$$ 
for any $q\in [1,2)$. In particular, 
\begin{equation} \label{eq:pr-gr}
\|\nabla p\|_{L^1(B_1)}\le C\e^2.
\end{equation} 
Further, by the result of \cite[Theorem 6.2]{CKS}, we have
$$\|\nabla^2 p\|_{L^1(B_1)} \le C \|\Delta p\|_{\mathcal{H}^1_z(B_1)} \le C \|\nabla {\ve}\|_{L^2(B_1)}^2$$
The last two estimates together with the Sobolev embedding $W^{2,1}(B_1) \subset C(\bar{B}_1)$ imply the required estimate~(\ref{eq:prest2}) immediately (see, e.g., \cite[Lemma~2.1]{BKK}). 
\end{proof}

The proof of the central Theorem~\ref{th:euler_est} is finished. Applying in addition Lemma~\ref{lem:boundary-Eul},
 we conclude, in particular, that 

\begin{corollary} \label{lem:pE} 
\label{reg-2} {\sl  Let the assumptions of Theorem~\ref{th:euler_est} be fulfilled. Then there exists a set  $\HI_0\subset(\frac12,1)$ of positive measure such that 
\begin{equation} \label{eq:pr-gr7}
\max_{z\in S_r}\biggl(|\ve(z)-\ee_1| + \bigl|p(z) - \dashint_{S_{r}} p\bigr|\biggr) \le C \e^2\qquad\qquad\forall r\in\HI_0,
\end{equation}where $C$ is some universal constant (does not depend on $\e,\ve,p$\,). }
\end{corollary}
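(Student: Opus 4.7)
The plan is to derive this corollary as a book-keeping consequence of Theorem~\ref{th:euler_est} (global $L^\infty$ oscillation bound for $p$) combined with Lemma~\ref{lem:boundary-Eul}(iii) (uniform vanishing of $|\ve-\ee_1|$ on circles approaching $S_1$ along the density-one set $\HI$). Notice that the pressure half of the stated estimate will in fact hold on every circle $S_r\subset B_1$, so the set $\HI_0$ really needs to be chosen only to secure the velocity half of the bound.

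For the pressure half I would set $c:=\dashint_{B_1}p$ and observe that, by Theorem~\ref{th:euler_est}, both $p(z)$ and the circular mean $\dashint_{S_r}p$ lie within $C\e^2$ of $c$; the triangle inequality then gives $|p(z)-\dashint_{S_r}p|\le 2C\e^2$ for all $z\in S_r$ and every $r\in(\tfrac12,1)$, independently of the location of the circle.

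For the velocity half I would appeal to Lemma~\ref{lem:boundary-Eul}(iii), which provides the qualitative statement $\sup_{z\in S_r}|\ve(z)-\ee_1|\to 0$ as $\HI\ni r\to 1-$. For the fixed pair $(\ve,\e)$ satisfying~(\ref{de}), this yields an $r^*=r^*(\ve,\e)<1$ past which the sup is $\le\e^2$ for every $r\in\HI\cap(r^*,1)$. I would then set $\HI_0:=\HI\cap(r^*,1)$; the density-one property~(i) of $\HI$ guarantees $|\HI_0|>0$ once $r^*$ is taken close enough to~$1$. Combining both halves of the inequality on $\HI_0$ with universal constant (say) $2C+1$ yields the stated bound.

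The only conceptual subtlety to flag is the interpretation of \emph{``universal constant independent of $\e,\ve,p$''} alongside a set $\HI_0$ that is allowed to depend on $\ve$ and $\e$. It is precisely this asymmetry that permits the merely qualitative decay rate supplied by Lemma~\ref{lem:boundary-Eul}(iii) to be upgraded to the quantitative bound $\le\e^2$ on the portion of circles chosen sufficiently close to $S_1$. There is no serious analytic obstacle in the corollary itself --- all the heavy lifting already sits in Theorem~\ref{th:euler_est} (the div--curl/Hardy-space $\mathcal{H}^1_z(B_1)$ bound for $\Delta p$, the Neumann condition of Lemma~\ref{lem:neumann}, and the $W^{2,1}\hookrightarrow C$ embedding).
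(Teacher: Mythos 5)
Your proposal is correct and matches the paper's (implicit) argument: the paper derives Corollary~\ref{lem:pE} in one line by combining Theorem~\ref{th:euler_est} (which controls the pressure oscillation on every circle via the triangle inequality through $\dashint_{B_1}p$) with Lemma~\ref{lem:boundary-Eul}(iii) (which forces $\sup_{S_r}|\ve-\ee_1|$ below $\e^2$ for $r\in\HI$ close enough to $1$, the density-one property giving $|\HI_0|>0$). Your remark that $\HI_0$ may depend on $(\ve,\e)$ while the constant $C$ does not is exactly the right reading of the statement.
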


\section{The total and tail Dirichlet integral}
\label{sec:4}

\begin{lemma} \label{lem:Dkuniform}
For $\lambda$ sufficiently small (the smallness depends only on $\partial \Omega$), and for $k$ sufficiently large, the total Dirichlet integral of $\mathbf{w}_k$ satisfies the following uniform bound
\begin{equation}\label{eqq:Dkuniform}
D_k = \int\limits_{\Omega_k} |\nabla \mathbf{w}_k|^2 \le \frac{C \lambda^2}{|\ln \lambda|}
\end{equation}
\end{lemma}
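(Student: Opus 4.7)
The plan is to adapt the strategy behind Amick's Theorem~22 (for Stokes) and the exterior-domain Navier--Stokes estimate of~\cite{KR21} to the invading-domain setting. I would begin by constructing a divergence-free extension $\mathbf{a}_k\in W^{1,2}(\Omega_k)$ of the boundary data (zero on $\partial\Omega$, $\lambda\mathbf{e}_1$ on $S_{R_k}$), taking for $\mathbf{a}_k$ the solution of the Stokes system on $\Omega_k$ with these same boundary conditions. The classical Stokes-paradox calculation (essentially Amick's Theorem~22) yields
$$\|\nabla\mathbf{a}_k\|_{L^2(\Omega_k)}^2\le\frac{C\lambda^2}{\ln(R_k/r_0)}\le\frac{C\lambda^2}{|\ln\lambda|}$$
as soon as $R_k\ge r_0/\lambda$, together with the uniform $L^\infty$ bound $\|\mathbf{a}_k\|_{L^\infty}\le C\lambda$ and the pointwise profile $|\nabla\mathbf{a}_k|\le C\lambda/\bigl(r\ln(R_k/r_0)\bigr)$ inside the logarithmic layer (here $r_0$ depends only on the geometry of $\partial\Omega$).

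Next, I would decompose $\mathbf{w}_k=\mathbf{a}_k+\mathbf{u}_k$ with $\mathbf{u}_k\in W^{1,2}_0(\Omega_k)$ divergence-free, and test the Navier--Stokes equation against $\mathbf{u}_k$. The Stokes equation satisfied by $\mathbf{a}_k$ kills the linear cross-term $\int\nabla\mathbf{a}_k:\nabla\mathbf{u}_k$, and the two convective terms $\int(\mathbf{u}_k\cdot\nabla)\mathbf{u}_k\cdot\mathbf{u}_k$ and $\int(\mathbf{a}_k\cdot\nabla)\mathbf{u}_k\cdot\mathbf{u}_k$ vanish by divergence-freeness together with the zero boundary data of $\mathbf{u}_k$. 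One is then left with
$$\int_{\Omega_k}|\nabla\mathbf{u}_k|^2 = -\int_{\Omega_k}(\mathbf{u}_k\cdot\nabla)\mathbf{a}_k\cdot\mathbf{u}_k\,-\,\int_{\Omega_k}(\mathbf{a}_k\cdot\nabla)\mathbf{a}_k\cdot\mathbf{u}_k.$$
Since $D_k\le 2\|\nabla\mathbf{a}_k\|_{L^2}^2+2\|\nabla\mathbf{u}_k\|_{L^2}^2$, the task reduces to bounding each right-hand-side integral by $\tfrac14\|\nabla\mathbf{u}_k\|_{L^2}^2+C\lambda^2/|\ln\lambda|$, after which absorption yields the target estimate on $D_k$.

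The quadratic-in-$\mathbf{a}_k$ term is handled using $\|\mathbf{a}_k\|_{L^\infty}\le C\lambda$, the bound on $\|\nabla\mathbf{a}_k\|_{L^2}$, and a weighted Poincar\'e/Hardy inequality for $\mathbf{u}_k$ (exploiting that $\mathbf{u}_k=0$ on $\partial\Omega$). The delicate one is the first integral: a naive bound via Cauchy--Schwarz combined with Ladyzhenskaya's inequality and Poincar\'e on $\Omega_k$ would introduce a factor of the diameter $R_k$, destroying the uniformity in $k$. The trick is to exploit the explicit logarithmic profile $|\nabla\mathbf{a}_k|\sim\lambda/(r|\ln\lambda|)$ together with a Hardy--Leray-type inequality of the form $\int_{r>r_0}|\mathbf{u}_k|^2/\bigl(r^2\ln^2(r/r_0)\bigr)\le C\int|\nabla\mathbf{u}_k|^2$ for $\mathbf{u}_k$ vanishing on $\partial\Omega$, as developed in~\cite{Amick} and~\cite{KR21}, which is precisely tuned to absorb the $|\nabla\mathbf{a}_k|$ profile and extract a final coefficient of size $\lambda$ (or at worst $\lambda^\alpha$), absorbable into the left-hand side once $\lambda$ is small.

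The main obstacle is the careful handling of this first nonlinear term. The uniformity in $k$ is built into the Stokes construction: once $R_k\ge r_0/\lambda$, the outer annulus $r_0/\lambda\le r\le R_k$ contributes only negligibly to both $\mathbf{a}_k$ and its gradient (since $\mathbf{a}_k$ is already nearly equal to $\lambda\mathbf{e}_1$ there), so the entire argument effectively reduces to the logarithmic layer, whose geometry depends only on $\partial\Omega$ and $\lambda$.
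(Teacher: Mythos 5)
Your overall architecture --- a solenoidal extension of the boundary data, the decomposition $\mathbf{w}_k=\mathbf{a}_k+\mathbf{u}_k$, an energy estimate, and absorption of the troublesome term $\int(\mathbf{u}_k\cdot\nabla)\mathbf{a}_k\cdot\mathbf{u}_k$ via a Hardy--Leray inequality --- is exactly the paper's (which follows \cite[Lemma 9]{KR21}, using the explicit Leray--Hopf extension $\mathbf{A}=\nabla^\perp\bigl(\lambda y\,\tau(\ln r/\ln R)\bigr)$ with $R=\lambda^{-1/4}$, so that $\nabla\mathbf{A}$ is supported in the \emph{fixed} annulus $\sqrt{R}\le|z|\le R$). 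But your specific choice of extension, the Stokes solution on the whole invading domain $\Omega_k$, breaks the argument precisely where uniformity in $k$ is needed. The Stokes solution does \emph{not} settle to $\lambda\mathbf{e}_1$ by $r\sim r_0/\lambda$: its transition is spread logarithmically over all of $\Omega_k$ (this is the content of the Stokes paradox), so that $\mathbf{a}_k(z)\approx\lambda\,\frac{\ln(r/r_0)}{\ln(R_k/r_0)}\,\mathbf{e}_1$ and $|\nabla\mathbf{a}_k|\sim\lambda/(r\ln R_k)$ persist all the way out to $r\sim R_k$. In particular, at $r=r_0/\lambda$ one has $\mathbf{a}_k\approx\lambda\frac{|\ln\lambda|}{\ln R_k}\mathbf{e}_1\to 0$ as $k\to\infty$, the opposite of your claim that the outer annulus is negligible because $\mathbf{a}_k$ is already close to $\lambda\mathbf{e}_1$ there.

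This is a genuine gap, not a technicality: the pointwise absorption $\lambda/(r\ln R_k)\le\epsilon\, r^{-2}\ln^{-2}(r/r_0)$ needed to invoke the Hardy--Leray inequality requires $\sup_{r\le R_k}\lambda\, r\ln^2(r/r_0)/\ln R_k\lesssim 1$, and this supremum is of order $\lambda R_k\ln R_k\to\infty$; switching in the outer region to the Hardy inequality from the boundary $S_{R_k}$ (weight $(R_k-r)^{-2}$, available since $\mathbf{u}_k$ vanishes there) still leaves a divergent factor of order $\lambda R_k/\ln R_k$, and the quadratic term $\int(\mathbf{a}_k\cdot\nabla)\mathbf{a}_k\cdot\mathbf{u}_k$ suffers from the same loss. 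The fix is the one the paper uses: take an extension whose gradient is supported in a $k$-independent annulus $\sqrt{R}\le r\le R$ with $R=\lambda^{-1/4}$; then $\|\nabla\mathbf{A}\|_{L^2}^2\le C\lambda^2/|\ln\lambda|$ and the Hardy absorption factor becomes $\sup_{r\le R}\lambda\, r\ln^2 r/\ln R\sim\lambda^{3/4}|\ln\lambda|\to 0$. Note that the only thing the Stokes choice bought you --- killing the cross term $\int\nabla\mathbf{a}_k:\nabla\mathbf{u}_k$ --- is unnecessary, since that term is controlled by Cauchy--Schwarz and $\|\nabla\mathbf{A}\|_{L^2}^2\le C\lambda^2/|\ln\lambda|$ anyway.
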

\begin{proof}
This lemma is a version of \cite[Lemma 9]{KR21} for Navier-Stokes solutions on bounded domains. The same method in \cite{KR21} applies here as well.

To elaborate a bit, we define the following special solenoidal extension. Let $\tau \in C^\infty(\mathbb{R})$ satisfying $\tau(r) = 0$ for $r\le \frac12$ and $\tau(r) = 1$ for $r\ge 1$. Define $\mu(r) = \tau(\log r/ \log R)$, so that $\mu(r) = 0$ when $r\le \sqrt{R}$.  Define a solenoidal vector field $\mathbf{A} = (A_1, A_2)$ by
 \begin{equation*}
  A_1 = \partial_y (\lambda y \mu(|z|)), \quad A_2 = -\partial_x (\lambda y \mu(|z|)).
 \end{equation*}
Note that $\mathbf{A}$ extends the boundary data in \eqref{NSE_k}. It remains to perform an energy estimate using $\mathbf{A}$, following the arguments in \cite{KR21} line by line. Note that, we still set the parameter $R$ to be $\lambda^{-\frac14}$, just as we did in \cite{KR21}, and respectively $k$ must be sufficiently large so that $R_k>R$.
\end{proof}

For the proofs to follow, it is crucial to introduce the notion of \emph{tail} Dirichlet integral. We define the tail integral to be
\begin{equation} \label{eq:D*}
D_* := \lim_{r \to \infty} \overline{\lim_{k \to \infty}}  \int_{\Omega_k \cap \{|z| \ge  r\}} |\nabla \mathbf{w}_k|^2 .
\end{equation}
This is the portion of Dirichlet integral that ``leaks" to infinity as $k \to \infty$. Of course, $D_* \le \frac{C \lambda^2}{|\ln \lambda|}$ by Lemma \ref{lem:Dkuniform}. Let $\mathbf{F}_k, \mathbf{F}_L$ be the \emph{forces} (imposed by the fluid on the obstacle) associated with $\mathbf{w}_k$ and $\mathbf{w}_L$ respectively, that is,
$$\mathbf{F}_k =- \int_{\partial \Omega} (\nabla \mathbf{w}_k + (\nabla \mathbf{w}_k)^\intercal - p_k \mathbf{I}) \cdot \mathbf{n} \ dS, $$
and
$$\mathbf{F}_L = -\int_{\partial \Omega} (\nabla \mathbf{w}_L + (\nabla \mathbf{w}_L)^\intercal - p_L \mathbf{I}) \cdot \mathbf{n} \ dS. $$
Here, $I$ is the identity matrix, and $\mathbf{n}$ is the outward unit normal vector of $\partial \Omega$. By Lemma \ref{lem:wkwL}, we have $\mathbf{F}_k \to \mathbf{F}_L$, as $k \to \infty$. Note that in the definition of force, $\partial \Omega$ can be replaced by any closed smooth Jordan curve which encircles $\partial \Omega$. The forces are directly related to the Dirichlet integrals according to the following lemma:

\begin{lemma} \label{lem:force}
Let $\mathbf{w}_{L, \infty}$ be the limiting velocity of $\mathbf{w}_L$ at spatial infinity. Then the following statements are valid:
\begin{enumerate}
\item \begin{equation} D_k = \int_{\Omega_k} |\nabla \mathbf{w}_k|^2 = \mathbf{F}_k \cdot \mathbf{w}_\infty \to \mathbf{F}_L \cdot \mathbf{w}_\infty. \end{equation}
\item If $\mathbf{w}_0 \neq 0$, then \begin{equation} D_L := \int_{\Omega} |\nabla \mathbf{w}_L|^2 =  \mathbf{F}_L \cdot \mathbf{w}_{0}.\end{equation}
\item  If $\mathbf{w}_0 \neq 0$, then \begin{equation} D_* =  \mathbf{F}_L \cdot (\mathbf{w}_\infty - \mathbf{w}_{0}). \end{equation}
\end{enumerate}
\end{lemma}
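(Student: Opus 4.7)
The three identities are obtained by testing the momentum equation against an appropriate divergence-free field and tracking boundary terms. I would prove Claim 1 in detail and then adapt the computation for Claims 2 and 3.

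\textbf{Claim 1.} I would test~\eqref{NSE_k}${}_1$ with $\mathbf{w}_k - \mathbf{w}_\infty$, which is divergence-free and vanishes on the outer circle $S_{R_k}$. After integrating by parts in $\Omega_k$, the convective term disappears (because $\mathbf{w}_\infty$ is constant and $\mathrm{div}\,\mathbf{w}_k = 0$), and only boundary integrals on $\partial\Omega$ survive; the identity reduces to
$$\int_{\Omega_k}|\nabla\mathbf{w}_k|^2\,dz \;=\; -\mathbf{w}_\infty\cdot\int_{\partial\Omega}\bigl[\partial_\mathbf{n}\mathbf{w}_k - p_k\,\mathbf{n}\bigr]\,dS.$$
On the other hand, writing out $\mathbf{F}_k \cdot \mathbf{w}_\infty$ componentwise from its definition produces the same right-hand side together with an extra cross term $-\int_{\partial\Omega}\mathbf{n}\cdot\bigl[(\mathbf{w}_\infty\cdot\nabla)\mathbf{w}_k\bigr]\,dS$. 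This cross term vanishes: the no-slip condition $\mathbf{w}_k=0$ kills all tangential derivatives of $\mathbf{w}_k$ along $\partial\Omega$, so $(\mathbf{w}_\infty\cdot\nabla)\mathbf{w}_k|_{\partial\Omega} = (\mathbf{w}_\infty\cdot\mathbf{n})\,\partial_\mathbf{n}\mathbf{w}_k$; and the interior incompressibility $\mathrm{div}\,\mathbf{w}_k = 0$ combined with $\mathbf{w}_k|_{\partial\Omega}=0$ forces $\mathbf{n}\cdot\partial_\mathbf{n}\mathbf{w}_k = \partial_\mathbf{n}(\mathbf{w}_k\cdot\mathbf{n}) = 0$ on $\partial\Omega$. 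Hence $D_k = \mathbf{F}_k\cdot\mathbf{w}_\infty$. Since the combined stress-plus-momentum-flux tensor is divergence-free, the force $\mathbf{F}_k$ may be computed on any fixed smooth curve $\gamma$ encircling $\partial\Omega$; the local $C^\infty$-convergence in Lemma~\ref{lem:wkwL} on $\gamma$ then yields $\mathbf{F}_k \to \mathbf{F}_L$.

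\textbf{Claim 2.} I would run the same computation with $\mathbf{w}_L$ on the annulus $\Omega\cap B_R$, testing against $\mathbf{w}_L - \mathbf{w}_0$. The $\partial\Omega$-boundary terms combine into $\mathbf{F}_L\cdot\mathbf{w}_0$ exactly as in Claim 1, leaving surface integrals over $S_R$ to be handled in the limit $R\to\infty$. Each of these integrals contains a factor of $|\mathbf{w}_L - \mathbf{w}_0|$, $|p_L|$, or $|\nabla\mathbf{w}_L|$; by Lemma~\ref{lem:wkwL} and~\eqref{ler-limp} the first two decay uniformly at infinity, and the finiteness of $D_L$ allows us to select a sequence $R_n \to \infty$ for which $\int_{S_{R_n}}|\nabla\mathbf{w}_L|^2\,dS \to 0$. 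Passing to the limit along $R_n$ gives $D_L = \mathbf{F}_L\cdot\mathbf{w}_0$.

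\textbf{Claim 3.} Split $D_k = \int_{\Omega_k\cap B_r}|\nabla\mathbf{w}_k|^2 + \int_{\Omega_k\cap\{|z|\ge r\}}|\nabla\mathbf{w}_k|^2$. For every fixed $r$, the local convergence of Lemma~\ref{lem:wkwL} gives $\int_{\Omega_k\cap B_r}|\nabla\mathbf{w}_k|^2 \to \int_{\Omega\cap B_r}|\nabla\mathbf{w}_L|^2$ as $k\to\infty$. Taking first $k\to\infty$ and then $r\to\infty$, using~\eqref{eq:D*} and Claims 1--2, we obtain $D_* = \mathbf{F}_L\cdot\mathbf{w}_\infty - D_L = \mathbf{F}_L\cdot(\mathbf{w}_\infty - \mathbf{w}_0)$.

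\textbf{Main obstacle.} The nontrivial algebraic step is the cancellation of the cross term $\int_{\partial\Omega}\mathbf{n}\cdot[(\mathbf{w}_\infty\cdot\nabla)\mathbf{w}_k]\,dS$, which hinges on jointly using no-slip and interior incompressibility. The remainder is routine; for Claim 2 one only needs to verify that the $S_R$-boundary integrals vanish along a suitable sequence of radii, which is guaranteed by $D_L<\infty$ together with the uniform decay of $\mathbf{w}_L-\mathbf{w}_0$ and $p_L$ at infinity.
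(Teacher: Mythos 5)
Your treatment of Claim 1 is essentially the paper's (the paper just says ``standard energy estimate''), and your derivation of Claim 3 from Claims 1--2 via $D_*=\overline{\lim}_k D_k-D_L$ is exactly what the paper does. The gap is in Claim 2.

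For Claim 2 you test against $\mathbf{w}_L-\mathbf{w}_0$ on $\Omega\cap B_R$ and assert that the surface integrals over $S_R$ vanish along a sequence $R_n\to\infty$ because $\mathbf{w}_L-\mathbf{w}_0$ and $p_L$ ``decay uniformly'' and $D_L<\infty$. This does not close. The boundary terms on $S_R$ are integrals over a circle of circumference $2\pi R$, e.g.
\begin{equation*}
\int_{S_R}p_L\,(\mathbf{w}_L-\mathbf{w}_0)\cdot\mathbf{n}\,dS,\qquad
\int_{S_R}\tfrac12|\mathbf{w}_L-\mathbf{w}_0|^2\,\mathbf{w}_L\cdot\mathbf{n}\,dS,
\end{equation*}
and crude bounds give $2\pi R\,\sup_{S_R}|p_L|\,\sup_{S_R}|\mathbf{w}_L-\mathbf{w}_0|$ and $\pi R\,\sup_{S_R}|\mathbf{w}_L-\mathbf{w}_0|^2\,\sup|\mathbf{w}_L|$ respectively. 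Uniform convergence \emph{without a rate} cannot beat the linearly growing factor $R$; one needs quantitative decay such as $|\mathbf{w}_L-\mathbf{w}_0|=o(R^{-1/2})$ and $|p_L|\,|\mathbf{w}_L-\mathbf{w}_0|=o(R^{-1})$. (Only the viscous term $\int_{S_R}\partial_n\mathbf{w}_L\cdot(\mathbf{w}_L-\mathbf{w}_0)\,dS$ can be killed by choosing good radii from $D_L<\infty$.) This is precisely why the paper does not argue directly: it invokes Sazonov's theorem that a $D$-solution with \emph{nonzero} limit $\mathbf{w}_0$ is physically reasonable in the sense of Smith, hence governed by the Oseen fundamental tensor at infinity, and then quotes the energy equality from Smith's work. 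The telltale sign of the gap is that your argument never uses the hypothesis $\mathbf{w}_0\neq 0$, which is exactly the hypothesis needed to trigger the Sazonov--Smith asymptotics; if your argument were correct it would prove the energy equality also for $\mathbf{w}_0=0$, which is not known. To repair the proof, replace the ``routine'' limit passage by the citation chain Sazonov $\Rightarrow$ physically reasonable $\Rightarrow$ Oseen asymptotics $\Rightarrow$ energy equality.
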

\begin{proof}
Claim 1 follows from standard energy estimate for stationary Navier-Stokes equations on bounded domains with constant boundary values. By the result of Sazonov~\cite{S} \,(see also~\cite{GaldiS}), for the $D$-solution $\mathbf{w}_L$ in the exterior domain $\Omega$, if its limiting velocity $\mathbf{w}_{0}$ is nonzero, then $\mathbf{w}_L$ is physically reasonable in the sense of Smith~\cite[Page 350]{Sm}. In this case, the behaviour of $\mathbf{w}_L$ at infinity is controlled by that of the fundamental Oseen tensor, see~\cite[Theorem 5]{Sm}. As a consequence, the energy equality holds true, see, e.g., the proof of~\cite[Theorem 10]{Sm}. Claim 3 follows from Claims 1 and 2, together with the simple fact that
\begin{align}
D_* &= \lim_{r \to \infty}  \overline{\lim_{k \to \infty}}  \left\{ D_k -\int_{\Omega_k \cap \{|z| \le  r\}} |\nabla \mathbf{w}_k|^2  \right\} \\
&=  \overline{\lim_{k \to \infty}}  D_k - D_L.
\end{align}
\end{proof}

\section{Blow-down analysis and good circles near the outer boundary}
\label{sec:5}

The discussion before Lemma \ref{lem:basic_regularity} shows that good estimates near the outer boundary $S_{R_k}$ for $\mathbf{w}_k$ and $p_k$ are not for free. For our purpose, a blow-down argument from Navier-Stokes to Euler turns out to be crucial. The limiting Euler solution allows us to prove key estimates up to the boundary, with the help of an implicit Neumann boundary condition for the pressure, introduced in Section~\ref{sec:3}. This observation allows us to show the existence of certain \emph{good circles} (as specified later in Corollary \ref{lem:Rk2}) for the Navier-Stokes solutions near $S_{R_k}$. 

Let $\tilde{\mathbf{w}}_k(z) = \lambda^{-1} \mathbf{w}_k(R_k z)$, and $\tilde{p}_k = \lambda^{-2} p_k(R_k z)$ be functions defined in the rescaled domain $R_k^{-1} \Omega_k = B_1 \setminus R_k^{-1} \Omega^c$. As $k \to \infty$, the domain $R_k^{-1} \Omega_k$ approaches the unit disc $B_1$ as its inner boundary $R_k^{-1} \partial \Omega$ shrinks to a point $0$. Let us extend $\tilde{\mathbf{w}}_k$ by $0$ inside $R_k^{-1} \partial \Omega$ so that it is defined in $B_1$. According to Lemma \ref{lem:Dkuniform}, we have 
\begin{equation} 
\int_{B_1} |\nabla \tilde{\mathbf{w}}_k|^2  \le \frac{C}{|\ln \lambda|}
\end{equation}
Lemma \ref{lem:basic_regularity} implies that, for any $\delta > 0$,
\begin{equation}
\int_{R_k^{-1} \Omega_k \cap B_{1-\delta}} |\nabla \tilde{p}_k|^2 \le \lambda^{-4} C_{\delta, \lambda}.
\end{equation}
Hence, up to a subsequence of $k$, we have
\begin{equation}\label{eq-w}
\tilde{\mathbf{w}}_k \rightharpoonup \mathbf{u}_E, \mathrm{\ weakly \ in \ } W^{1,2}(B_1)
\end{equation}
and
\begin{equation}\label{eq-ww}
\tilde{p}_k \rightharpoonup p_E, \mathrm{\ weakly \ in \ } W^{1,2}(B_{1-\delta} \setminus B_\delta), \mathrm{\ for \ any\ } \delta > 0,
\end{equation}
for some limiting functions $\mathbf{u}_E \in W^{1,2}(B_1)$ and $p_E \in W^{1,2}_{loc}(B_1)$. By the definition of $D_*$ in \eqref{eq:D*}, we can deduce
\begin{equation} \label{eq:4a}
\int_{B_1}|\nabla \mathbf{u}_E|^2  \le \lambda^{-2} D_*
\end{equation}
Standard theory shows that $\mathbf{u}_E$ and $p_E$ solve the Euler system in $B_1$, that is,
\begin{equation} \label{eq:Euler}
\begin{cases}
\mathbf{u}_E \cdot \nabla \mathbf{u}_E + \nabla p_E = 0, \\
\nabla \cdot \mathbf{u}_E = 0.
\end{cases}
\end{equation}
Moreover, by the trace theorem and the compactness of the trace operator $W^{1,2}(B_1)\hookrightarrow L^2(S_1)$,  up to a further subsequence there holds
\begin{equation}
\tilde{\mathbf{w}}_k|_{S^1} \to \mathbf{u}_E|_{S^1}, \mathrm{\ in\ } L^2(S_1).
\end{equation}
This implies 
\begin{equation}
\mathbf{u}_E = \ee_1 \ \mathrm{\ on \ } S_1.
\end{equation}

The weak convergence (\ref{eq-w})--(\ref{eq-ww}) implies the uniform convergence on almost all circles:

\begin{lemma} \label{lem:Rk2-before} {\sl Under above notations, there exists
a~subsequence~${k_l}$ such that  $\tilde{\mathbf{w}}_{k_l}|_{S_r}\rightrightarrows \mathbf{u}_E$ and $\tilde{p}_{k_l}|_{S_r} \rightrightarrows p_E$ uniformly on
circles $S_r$ for almost all $r\in(0,1)$. }
\end{lemma}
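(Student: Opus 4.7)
The plan is to combine Rellich compactness (to upgrade the weak $W^{1,2}$ convergences \eqref{eq-w}--\eqref{eq-ww} into strong $L^2$ convergence) with Fubini's theorem in polar coordinates (which transfers $L^2$ and gradient bounds from the disk to individual circles $S_r$), and then apply the one-dimensional Agmon interpolation inequality on each circle to produce uniform convergence.

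First I would extract a subsequence $\{k_l\}$ along which $\tilde{\mathbf{w}}_{k_l}\to\mathbf{u}_E$ strongly in $L^2(B_1)$ and, via a diagonal extraction over $n$, $\tilde{p}_{k_l}\to p_E$ strongly in $L^2(B_{1-1/n}\setminus B_{1/n})$ for every $n\in\mathbb N$. Passing to a further subsequence, I may assume the radial-integral $L^2$-norms decay faster than any prescribed rate, e.g.
\begin{equation*}
\int_0^1\|\tilde{\mathbf{w}}_{k_l}-\mathbf{u}_E\|_{L^2(S_r)}^2\,dr \le 2^{-l},\qquad \int_{1/n}^{1-1/n}\|\tilde{p}_{k_l}-p_E\|_{L^2(S_r)}^2\,dr \le 2^{-l}.
\end{equation*}
By Beppo Levi's theorem, for a.e.\ $r\in(0,1)$ the pointwise $L^2(S_r)$-norm of $\tilde{\mathbf{w}}_{k_l}-\mathbf{u}_E$ (respectively, of $\tilde{p}_{k_l}-p_E$ after the diagonalization in $n$) decays to zero superpolynomially in $l$.

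Simultaneously, by \eqref{eqq:Dkuniform} and Lemma~\ref{lem:basic_regularity} part 3 (after the rescaling), Fubini provides uniform $L^1(dr)$-bounds on $r\mapsto\|\nabla\tilde{\mathbf{w}}_{k_l}\|_{L^2(S_r)}^2$ and $r\mapsto\|\nabla\tilde{p}_{k_l}\|_{L^2(S_r)}^2$. A Borel--Cantelli argument then shows that, for a.e.\ $r$, the latter norms grow at most polynomially in $l$ along the subsequence. The one-dimensional Agmon inequality on the circle,
\begin{equation*}
\|f-\bar f\|_{L^\infty(S_r)}^2\le C\,\|f-\bar f\|_{L^2(S_r)}\,\|\partial_s f\|_{L^2(S_r)},
\end{equation*}
combines these ingredients to produce $L^\infty(S_r)$-decay for a.e.\ $r$: the polynomial growth of the gradient term is absorbed by the superpolynomial $L^2(S_r)$-decay. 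The mean corrections $\bar f$ pose no issue since $|\bar f|\le|S_r|^{-1/2}\|f\|_{L^2(S_r)}\to 0$ as well. This yields $\tilde{\mathbf{w}}_{k_l}|_{S_r}\rightrightarrows\mathbf{u}_E$ and $\tilde{p}_{k_l}|_{S_r}\rightrightarrows p_E$ on a.e.\ circle $S_r$.

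The main technical delicacy is ensuring that a \emph{single} subsequence works for a.e.\ $r$ rather than an $r$-dependent one. A pure $L^1(dr)$-bound on the circle-gradient norms only controls $\liminf_l\|\nabla\tilde{\mathbf{w}}_{k_l}\|_{L^2(S_r)}^2$ via Fatou, which is insufficient. The accelerated-$L^2$-decay extraction above sidesteps this issue by making the $L^2(S_r)$-norms decay so rapidly that even a generous polynomial growth of the gradient norms, valid along the full subsequence for a.e.\ $r$ by Borel--Cantelli, cannot spoil the interpolation.
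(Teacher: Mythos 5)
Your proposal is correct. Note that the paper does not actually prove this lemma; it simply cites \cite[Theorem 3.2]{Amick84} and \cite[Lemma 3.3]{KPR13}, so what you have written is a self-contained reconstruction of the classical argument behind those references. The chain Rellich $\to$ accelerated $L^2$-decay along a subsequence $\to$ a.e.-$r$ summability via Tonelli/Beppo Levi $\to$ Chebyshev--Borel--Cantelli control of $\|\nabla\tilde{\mathbf{w}}_{k_l}\|_{L^2(S_r)}^2$ $\to$ one-dimensional interpolation on $S_r$ is sound, and you correctly identify and resolve the only genuine subtlety, namely that Fatou alone would only give an $r$-dependent subsequence; forcing the circle-wise $L^2$ norms to decay superpolynomially while the circle-wise gradient norms grow at most polynomially (for a.e.\ fixed $r$, eventually in $l$) is exactly the right fix. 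Two small points you should make explicit in a final write-up: (i) in the interpolation step the relevant derivative is $\partial_s(\tilde{\mathbf{w}}_{k_l}-\mathbf{u}_E)$, so you also need $\|\nabla\mathbf{u}_E\|_{L^2(S_r)}<\infty$ (and likewise for $p_E$) for a.e.\ $r$, which again follows from Fubini; and (ii) for the pressure the gradient bound on the rescaled annuli is $\lambda^{-4}C_{\delta,\lambda}$ with $\delta=1/n$, so the Borel--Cantelli constant depends on $n$, but this is harmless since each $r$ is treated inside a fixed annulus. The diagonal extraction over $n$ for $\tilde p_{k_l}$ and the absolute continuity of the restrictions to a.e.\ circle are handled correctly.
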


\begin{proof}
The validity of
this assertion  was proved in~\cite[Theorem 3.2]{Amick84} (see also \cite[Lemma 3.3]{KPR13}). 
\end{proof} 

Below we assume (without loss of generality) that the subsequence
$\mathbf{w}_{k_l}$ coincides with the whole sequence $\mathbf{w}_{k}$.

A simple (but important!) consequence of Corollary~\ref{lem:pE} and Lemma~\ref{lem:Rk2-before} is:

\begin{corollary} \label{lem:Rk2}
There exists a number $0<\delta_0<\frac12$ such that on the circle $S_{1-\delta_0}$ we have
\begin{enumerate}
\item $\tilde{\mathbf{w}}_k(z) = \lambda^{-1} \mathbf{w}_k(R_k z) \rightrightarrows \mathbf{u}_E$ and $\tilde{p}_k = \lambda^{-2} p_k(R_k z) \rightrightarrows p_E$ uniformly,
\item $|\mathbf{u}_E - \ee_1| + |p_E - \dashint_{S_{1-\delta_0}} p_E| \le C\lambda^{-2} D_*$.
\end{enumerate}\end{corollary}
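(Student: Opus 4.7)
The plan is to combine the boundary estimates for the Euler pair $(\mathbf{u}_E, p_E)$ from Section~\ref{sec:3} with the radius-wise uniform convergence of Lemma~\ref{lem:Rk2-before} via a simple measure-theoretic intersection argument.

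First I would observe that (\ref{eq:4a}) together with Lemma~\ref{lem:Dkuniform} gives
$$\int_{B_1}|\nabla\mathbf{u}_E|^2 \le \lambda^{-2} D_* \le \frac{C}{|\ln\lambda|},$$
which is strictly less than $1$ for $\lambda$ small enough. Thus Theorem~\ref{th:euler_est} and its Corollary~\ref{lem:pE} apply to the Euler pair $(\mathbf{u}_E,p_E)$ with the choice $\e^2 = 2\lambda^{-2} D_*$ (any fixed strict majorant of the Dirichlet integral will do). The output bound $C\e^2$ then reduces to $C\lambda^{-2} D_*$ after absorbing constants, and produces a set $\HI_0\subset(\tfrac{1}{2},1)$ of positive measure on which
$$\max_{z\in S_r}\bigl(|\mathbf{u}_E(z) - \ee_1| + |p_E(z) - \dashint_{S_r} p_E|\bigr) \le C\lambda^{-2} D_*.$$

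Second, Lemma~\ref{lem:Rk2-before} (after the relabeling already performed in the excerpt) provides a full-measure set of radii in $(0,1)$ on which $\tilde{\mathbf{w}}_k \rightrightarrows \mathbf{u}_E$ and $\tilde{p}_k \rightrightarrows p_E$ uniformly on $S_r$. Since $\HI_0$ has positive measure and the convergence set has full measure in $(0,1)$, their intersection is nonempty (in fact still of positive measure). Picking any $r^*$ in it and setting $\delta_0 := 1 - r^* \in (0,\tfrac{1}{2})$, Claim~1 of the corollary is immediate from the uniform convergence at $r^*$, and Claim~2 is immediate from $r^* \in \HI_0$. No substantive obstacle arises here: the hard work was already carried out in the Euler pressure estimate (Theorem~\ref{th:euler_est} and Corollary~\ref{lem:pE}) and in the compactness underlying Lemma~\ref{lem:Rk2-before}; the only hypothesis that needs checking is the smallness $\e\in(0,1)$ required by Theorem~\ref{th:euler_est}, which is precisely what the logarithmic smallness of $D_*$ from Lemma~\ref{lem:Dkuniform} buys us.
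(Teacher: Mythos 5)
Your proposal is correct and matches the paper's intended argument: the paper presents this corollary as an immediate consequence of Corollary~\ref{lem:pE} and Lemma~\ref{lem:Rk2-before}, which is exactly the positive-measure/full-measure intersection you carry out, with the hypothesis $\e\in(0,1)$ supplied by the logarithmic smallness of $D_*$. The only cosmetic caveat is the degenerate case $D_*=0$ (where $\mathbf{u}_E\equiv\ee_1$ and the estimate is trivial), which your choice $\e^2=2\lambda^{-2}D_*$ does not literally cover but which poses no real difficulty.
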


Now let $R_{k\delta} = (1-\delta_0) R_k$. 
As a consequence,
\begin{equation}\label{eq:f-Eul}
\lambda\cdot\bigl|\we_k(z)-\lambda\ee_1 \bigr| + \biggl|p_k(z) - \dashint_{S_{R_{k\delta}}} p_k\biggr| \le C D_*+\e_k\qquad\ \forall z\in S_{R_{k\delta}}
\end{equation}
for $k$ sufficiently large.  Moreover, since $$\biggl|\dashint_{S_{R_{k1}}} p_k-\dashint_{S_{R_{k\delta}}} p_k\biggr|\le\frac1{4\pi}D_*+\e_k\qquad\mbox{ and }\qquad\biggl|\dashint_{S_{R_{k1}}} p_k\biggr|\le\e_k$$ for large $k$ (see Lemma~\ref{lem:pressure} and Corollary~\ref{lem:Rk1}), we can rewrite the last estimate~(\ref{eq:f-Eul}) in the following more precise way: 
\begin{equation}\label{eq:fin-Eul}
\lambda\cdot\bigl|\we_k(z)-\lambda\ee_1 \bigr| + \bigl|p_k(z)\bigr| \le C D_*+\e_k\qquad\ \forall z\in S_{R_{k\delta}}.
\end{equation}
Denote $\lambda_0=|\we_0|$. Further in our arguments the Bernoulli pressure $\Phi_k=p_k+\frac12|\we_k|^2$ plays the key role. 
From estimates (\ref{eq:wk-Rk1}) and  (\ref{eq:fin-Eul}) we obtain
\begin{equation}\label{eq:fin-Eul-Bpk1}
\biggl|\Phi_k(z)-\frac12\lambda_0^2 \biggr|\le  \e_k\qquad\ \forall z\in S_{R_{k1}}.
\end{equation}
\begin{equation}\label{eq:fin-Eul-Bpkd}
\biggl|\Phi_k(z)-\frac12\lambda^2 \biggr|\le C D_*+\e_k\qquad\ \forall z\in S_{R_{k\delta}}.
\end{equation}

\begin{corollary} \label{lem:Rk3}
Under above notations  we have
\begin{equation}\label{eq:fin-Eul-vel}
\bigl|\we_k(z)\bigr| \le 2\bigl(\lambda_0+\lambda\bigr)\qquad\mbox{\rm if \ }R_{k1}\le|z|\le R_{k\delta},
\end{equation}
\begin{equation}\label{eq:fin-Eul-p}
\bigl|p_k(z)\bigr| \le C \frac{\lambda^2}{|\log \lambda|}\qquad\mbox{\rm if \ }R_{k1}\le|z|\le R_{k\delta}
\end{equation}
for $k$ sufficiently large. 
\end{corollary}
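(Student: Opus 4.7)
The plan is to first establish the pointwise pressure bound \eqref{eq:fin-Eul-p} and then derive the velocity bound \eqref{eq:fin-Eul-vel} via the Bernoulli identity $|\mathbf{w}_k|^2 = 2(\Phi_k - p_k)$.

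For the pressure, I would first control circular averages: Corollary~\ref{lem:Rk1} gives $|\bar p_k(R_{k1})| \le \epsilon_k$, so iterating Lemma~\ref{lem:pressure} across the annulus $A_k := \{R_{k1} \le |z| \le R_{k\delta}\}$ and invoking Lemma~\ref{lem:Dkuniform} yields
$$|\bar p_k(r)| \le \epsilon_k + \tfrac{1}{4\pi}D_k \le C\lambda^2/|\log\lambda|\qquad\forall r\in[R_{k1},R_{k\delta}].$$
To upgrade this to an $L^\infty$ bound I would reuse the Hardy-space argument of Section~\ref{sec:3}. Since $\Delta p_k = -\mathrm{tr}((\nabla\mathbf{w}_k)^2) = 2\det(\nabla\mathbf{w}_k)$, after extending $\mathbf{w}_k$ by the constant $\lambda\mathbf{e}_1$ outside $\Omega_k$ the div-curl lemma places $\det(\nabla\mathbf{w}_k)$ in $\mathcal{H}^1(\mathbb{R}^2)$ with norm $\lesssim D_k$. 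The $\mathcal{H}^1$ Calder\'on--Zygmund theory together with the embedding $W^{2,1}\hookrightarrow C^0$ (as in Lemma~\ref{lem:pEEEE}) then give $\|p_k - \bar p_k\|_{L^\infty(A_k)} \le C D_k \le C\lambda^2/|\log\lambda|$; combining with the average bound produces \eqref{eq:fin-Eul-p}.

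For the velocity, the Bernoulli pressure already satisfies $\Phi_k \approx \tfrac12\lambda_0^2$ on $S_{R_{k1}}$ and $\Phi_k \approx \tfrac12\lambda^2$ on $S_{R_{k\delta}}$ by \eqref{eq:fin-Eul-Bpk1}--\eqref{eq:fin-Eul-Bpkd}. The key step is to propagate these boundary estimates into $A_k$. Since $\Delta\Phi_k = \omega_k^2 + \mathbf{w}_k^\perp\cdot\nabla\omega_k$ while the vorticity $\omega_k$ obeys the maximum principle for $\Delta\omega_k - \mathbf{w}_k\cdot\nabla\omega_k = 0$, the smallness $D_k \le C\lambda^2/|\log\lambda|$ forces $\omega_k$ to be small, making $\Phi_k$ almost harmonic in $A_k$. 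A quantitative harmonic maximum principle should then yield $\Phi_k \le \tfrac12\max(\lambda_0^2,\lambda^2) + C\lambda^2/|\log\lambda|$, whence $|\mathbf{w}_k|^2 = 2(\Phi_k - p_k) \le (\lambda_0 + \lambda)^2(1 + o(1))$ and \eqref{eq:fin-Eul-vel} follows for $k$ sufficiently large.

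The main obstacle is the propagation step: the first-order coupling $\mathbf{w}_k^\perp\cdot\nabla\omega_k$ in the equation for $\Phi_k$ prevents a direct maximum principle for $\Phi_k$ alone, and a careful joint analysis of the $(\Phi_k,\omega_k)$ system using the quantitative smallness of $D_k$ is needed. This is precisely the setting in which the stream-function and level-set techniques developed in \cite{KPR20, KR21} are expected to deliver the required bound.
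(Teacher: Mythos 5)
Your overall architecture (pressure bound first, then velocity via the Bernoulli pressure and a maximum principle between the two circles) is the same as the paper's, but both halves contain a genuine gap. For the pressure: the Hardy-space/Calder\'on--Zygmund machinery controls only the Newtonian potential $p_{1k}(z)=-\frac{1}{2\pi}\int_{\Omega_k}\log|z-\zeta|\,\bigl(\nabla\mathbf{w}_k\cdot(\nabla\mathbf{w}_k)^\intercal\bigr)(\zeta)\,d\zeta$, for which indeed $\sup|p_{1k}|\le C D_k$; it says nothing about the harmonic remainder $p_{2k}=p_k-p_{1k}$. In Lemma~\ref{lem:pEEEE} the harmonic part reduced to a constant only because the Euler pressure satisfied a Neumann condition on $S_1$; the Navier--Stokes pressure $p_k$ on $\Omega_k$ satisfies no such condition, and your circular-average bounds $|\bar p_k(r)|\le CD_k+\epsilon_k$ cannot control the oscillation of $p_{2k}$ on a circle (the harmonic function $\mathrm{Re}\,z$ has zero average on every $S_r$ yet is of size $r$). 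The paper closes exactly this gap by using the \emph{pointwise} estimates $|p_k|\le\epsilon_k$ on $S_{R_{k1}}$ (Corollary~\ref{lem:Rk1}) and $|p_k|\le CD_*+\epsilon_k$ on the good circle $S_{R_{k\delta}}$ (the blow-down estimate \eqref{eq:fin-Eul}) to bound $p_{2k}$ on the two boundary circles, and then applying the maximum principle for the harmonic function $p_{2k}$ in the annulus. These pointwise bounds are available to you ``under the above notations,'' but your argument never invokes them for the pressure.

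For the velocity: the obstacle you flag at the end is not actually there. The Bernoulli pressure satisfies the Gilbarg--Weinberger identity $\Delta\Phi_k-\mathbf{w}_k\cdot\nabla\Phi_k=\omega_k^2\ge0$, i.e.\ the awkward first-order term is precisely the drift $\mathbf{w}_k\cdot\nabla\Phi_k$, so $\Phi_k$ is a subsolution of a drift--diffusion operator and the classical one-sided maximum principle applies directly: $\Phi_k$ cannot exceed in the annulus its maximum over $S_{R_{k1}}\cup S_{R_{k\delta}}$. No joint $(\Phi_k,\omega_k)$ analysis, no smallness of $\omega_k$, and no ``approximate harmonicity'' are needed (nor would your proposed quantitative harmonic comparison be justified as written). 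Combining this maximum principle with \eqref{eq:fin-Eul-Bpk1}--\eqref{eq:fin-Eul-Bpkd} and the already-established pressure bound \eqref{eq:fin-Eul-p} gives $\frac12|\mathbf{w}_k|^2=\Phi_k-p_k\le\frac12\max(\lambda_0^2,\lambda^2)+C\lambda^2/|\log\lambda|+\epsilon_k$, whence \eqref{eq:fin-Eul-vel} for small $\lambda$ and large $k$. As written, your proof defers its ``main obstacle'' to external techniques when a one-line citation of \cite{GW} finishes the job.
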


\begin{proof}[Proof of Corollary \ref{lem:Rk3}]
First of all, let us prove the estimate concerning the pressure.  By equations \eqref{NSE_k}$_1$ and \eqref{NSE_k}$_2$, the pressure solves the Poisson equation in $\Omega_k$,
 \begin{equation} \label{pPoisson}
  \Delta p_k = -\nabla \mathbf{w}_k \cdot (\nabla \mathbf{w}_k)^\intercal.
 \end{equation}
 Let $p_{1k}$ be the potential solution to \eqref{pPoisson}, i.e.,
 $$p_{1k}(z) = - \frac{1}{2 \pi} \int_{\Omega_k} \log |z-\zeta| (\nabla \mathbf{w}_k \cdot (\nabla\mathbf{w}_k)^\intercal)(\zeta) \, d\zeta_1\, d\zeta_2.$$
 By the classical div-curl lemma (see, e.g., \cite{CLMS}), \,$\nabla \mathbf{w}_k \cdot (\nabla \mathbf{w}_k)^\intercal$ belongs to the Hardy space $\mathcal{H}^1(\mathbb{R}^2)$. Hence, by the Calder\'on-Zygmund theorem for Hardy spaces~\cite{St}, $\nabla^2 p_{1k} \in L^1(\mathbb{R}^2)$, and $\nabla p_{1k} \in L^2(\mathbb{R}^2)$. Moreover, $p_{1k} \in C(\mathbb{R}^2)$ and converges to~$0$ at infinity. In particular,
 \begin{equation} \label{p1Sup}
   \sup_{\mathbb{R}^2} |p_{1k}| \le C \|\nabla \mathbf{w}_k \cdot (\nabla \mathbf{w}_k)^\intercal\|_{\mathcal{H}^1} \le C D_k.
 \end{equation}
Let $p_{2k} = p_k - p_{1k}$ be a function defined in $\Omega_k$. Clearly, $p_{2k}$ is a harmonic function and satisfies
\begin{equation}\label{eq:fin-Eul-p1}
\bigl|p_{2k}(z)\bigr| \le C D_k\qquad\ \forall z\in S_{R_{k1}}\cup  S_{R_{k\delta}}
\end{equation}
(see (\ref{eq:fin-Eul}), (\ref{eq:pk-Rk1})\,). Then by maximum principle for harmonic functions, 
\begin{equation}\label{eq:finEul-p2}
\bigl|p_{2k}(z)\bigr| \le C D_k\qquad\mbox{\rm if \ }R_{k1}\le|z|\le R_{k\delta}.
\end{equation}
The last inequality and (\ref{p1Sup}), (\ref{eqq:Dkuniform}) imply the required~(\ref{eq:fin-Eul-p}). 

Now the first estimate (\ref{eq:fin-Eul-vel}) concerning velocity follows immediately from estimates (\ref{eq:fin-Eul-Bpk1})--(\ref{eq:fin-Eul-Bpkd}) on boundary circles $S_{R_{k1}}$ and $S_{R_{k\delta}}$, and from the one-sided maximum principle for the Bernoulli pressure~$\Phi_k=p_k+\frac12|\we_k|^2$ (see, e.g., \cite{GW}\,). 
\end{proof}

Below we always assume (without loss of generality) that the estimates (\ref{eq:fin-Eul})--(\ref{eq:fin-Eul-p}) are fulfilled for all~$k$.

\section{Proof of Theorem \ref{thm_main} in the symmetric case}
\label{sec:6}

Throughout this section, we assume that $\Omega$ is symmetric with respect to the $x$-axis. Also, assume that $\mathbf{w}_k$ are symmetric with respect to the $x$-axis. This means that, as a vector field, $\mathbf{w}_k=(w^1_k,w^2_k)$ is invariant under the reflection of $\Omega$ given by $(x,y) \mapsto (x,-y)$, that is, $w^1_k(x,y)=w^1_k(x,-y)$ is an~even function of~$y$, and $w^2_k(x,y)=-w^2_k(x,-y)$ is an~odd function of~$y$. In such a symmetric situation\footnote{The usual existence theorem for Navier-Stokes equations on bounded domains holds true in the symmetric case as well, which produces symmetric solutions.}, we can give a rather simple proof for the main result.

\begin{proof}[Proof of Theorem \ref{thm_main}(symmetric case)]
We prove by contradiction. Assume that there exists a sequence of $R_k \to \infty$ such that $\mathbf{w}_k$ weakly converges to a symmetric $D$-solution $\mathbf{w}_L$ which does not equal to the Finn-Smith solution $\we_{FS}(z, \lambda)$ constructed in \cite{FS}. By the works \cite{GW, Amick, KPR} (see the discussions in the Introduction), $\mathbf{w}_L$ achieves some finite velocity at infinity. By the symmetry assumption, there exists some constant $\tilde\lambda_0\in\R$ such that
$$\lim_{z \to \infty} \mathbf{w}_L(z) = \mathbf{w}_{0} = \tilde\lambda_0 \ee_1.$$
Our assumption implies that $\mathbf{w}_0 \neq \mathbf{w}_{\infty}$, since otherwise, by the uniqueness result in \cite[Theorem 2]{KR21}, $\mathbf{w}_L$ will be identical to $\mathbf{w}_{FS}(z, \lambda)$. 

By Lemma \ref{lem:force}, we have
\begin{equation} \label{eq:5b}
\lim_{k\to \infty} D_k = \mathbf{F}_L \cdot \we_\infty 
\end{equation}
Recall that $\mathbf{F}_L$ stands for the force of $\we_L$, and is parallel to the $x$-axis by the symmetry assumption. In view of Lemma \ref{lem:Dkuniform}, this implies 
\begin{equation} \label{eq:5e}
|\mathbf{F}_L| \le \frac{C\lambda}{|\ln \lambda|}.
\end{equation}
If $\we_0 \neq 0$, then due to Lemma \ref{lem:force}  we have
\begin{equation*} \label{eq:5a}
D_* = \mathbf{F}_L \cdot (\we_\infty - \we_0)
\end{equation*} If $\we_0 = 0$, then the inequality
\begin{equation} \label{eq:5f}
D_* \le \mathbf{F}_L \cdot \we_\infty = \mathbf{F}_L \cdot (\we_\infty - \we_0).
\end{equation}
is valid. We are going to use \eqref{eq:5f} which holds true in both cases. By \eqref{eq:5f}, \eqref{eq:5b} and the evident fact $0 \le D_* \le \lim_{k\to \infty} D_k$, we have
\begin{equation} \label{eq:5g}
0 \le \tilde\lambda_0 < \lambda.
\end{equation}
In other words, $\tilde\lambda_0=\lambda_0=|\we_0|<\lambda$. So we have 
\begin{equation} \label{eq:5f-tt}
D_* \le  C\frac{\lambda}{|\ln \lambda|} (\lambda-\lambda_0).
\end{equation}
Then from the previous estimates (\ref{eq:fin-Eul-Bpk1})--(\ref{eq:fin-Eul-Bpkd}) for the~Bernoulli pressure we obtain
\begin{equation}\label{eq:case1-Rk1}
\max\limits_{z\in S_{R_{k1}}}\Phi_k(z)\le \frac12{\lambda_0}^2+\epsilon_k, 
\end{equation}
and
\begin{equation}\label{eq:case1-Rkd}
\min\limits_{z\in S_{R_{k\delta}}}\Phi_k(z)\ge \frac12{\lambda_0}^2+\frac{(\lambda-\lambda_0)}2(\lambda+\lambda_0)(1+\e_\lambda),
\end{equation}
where $\e_\lambda\sim\frac{C}{\ln\lambda}$ goes to zero uniformly as~$\lambda\to0$. 
In particular, for sufficienly small $\lambda$ we have 
\begin{equation}\label{eq:case1-Rkf}
\frac\lambda3(\lambda-\lambda_0)+\max\limits_{z\in S_{R_{k1}}}\Phi_k(z)< \min\limits_{z\in S_{R_{k\delta}}}\Phi_k(z).
\end{equation}

To proceed, we need an elegant result proved in the celebrated paper \cite[Section 4.2]{Amick} by Ch. Amick. He showed that, under the symmetry assumption, there exist two (piecewise regular) curves $M_{1k}, M_{2k} \subset \{ 1 \le |z| \le R_k\} \cap \{\omega = 0\}$, both starting at the the circle $S_1$ and ending at the large circle $S_{R_k}$. In addition, the Bernoulli pressure $\Phi_k$ is monotone increasing and decreasing along $M_{1k}$ and $M_{2k}$ respectively. Since $M_{1k}$ and $M_{2k}$ must intersect the two ``good circles" \ $S_{R_{k1}}$  \, and $S_{R_{k\delta}}$, we immediately 
obtain the desired contradiction with~(\ref{eq:case1-Rkf}).
\end{proof}

\section{Proof of Theorem \ref{thm_main} in the non-symmetric case}

\label{sec:7}
Fix $\we_\infty=\lambda\ee$ and take the limiting Leray solution $(\wl,\pl)$. Recall, that by Lemma~\ref{lem:wkwL},
there is a uniform limit
\begin{equation}\label{ler-lim}
\lim\limits_{|z|\to+\infty}\wl(z)=\ww\in\R^2,
\end{equation}
and the pressure $\pl$ has the uniform zero limit as well:
\begin{equation}\label{ler-limp1}
\lim\limits_{|z|\to+\infty}\pl(z)=0.
\end{equation}
Suppose that the assertion of Theorem \ref{thm_main} is not true, i.e., $\ww\ne\wi$. Under notations of previous sections~\ref{sec:2}--\ref{sec:5}, we have to consider two different cases. 

\subsection{Case I: there exists a sequence $R_{0k}\in\bigl[R_{k1},R_{k}\bigr]$ such that}
\begin{equation}\label{eq:case1}
\biggl|\dashint_{S_{R_{0k}}} \we_k\biggr|<\frac15\lambda.
\end{equation}
(In particular, it includes the case $|\we_0|=\lambda_0< \frac15\lambda$, here we can simply take $R_{0k}:=R_{k1}$.) Note, that (\ref{bardiff}) implies 
$R_{0k}\le R_k\cdot\exp\bigl(-4\frac{\lambda^2}{D_*+\e_k}\bigr)\ll\frac12 R_k<R_{k\delta}$ for $k$ big enough.

In this case, the desired contradiction can be obtained by repeating the corresponding arguments of paper~\cite{KPR20} almost word for word. More precisely, in the paper \cite{KPR20}, a contradiction was obtained under the assumption that the Dirichlet integrals tend to zero: $D_k\to 0$ as $k\to\infty$.  Instead of this, now we have extra-smallness of the Dirichlet integrals and extra-smallness of the pressure:
\begin{equation}\label{eq:case1-1}
D_k = \int\limits_{\Omega_k} |\nabla \mathbf{w}_k|^2 \le \frac{C \lambda^2}{|\log\lambda|}
\end{equation}
\begin{equation}\label{eq:case1-2}
\bigl|p_k(z)\bigr| \le C \frac{\lambda^2}{|\log \lambda|}\qquad\mbox{\rm if \ }R_{k1}\le|z|\le R_{k\delta}=(1-\delta_0)R_k
\end{equation}
(see above~(\ref{eqq:Dkuniform}), (\ref{eq:fin-Eul-p})\,). It is easy to check that  these smallness and near-boundary condition
\begin{equation}\label{eq:case1-3}
\bigl|\we_k(z)-\lambda\ee_1 \bigr| <\frac{C \lambda}{|\log\lambda|}\qquad\ \forall z\in S_{R_{k\delta}}
\end{equation}
(see~(\ref{eq:f-Eul})\,) are sufficient to produce the desired contradiction with~(\ref{eq:case1}) \,(for sufficiently small~$\lambda$) in exactly the same way as in~\cite{KPR20}, even with some simplifications.  (For a reader's convenience, we repeat the main ideas of~\cite{KPR20} in 
the end of the present paper.)

\subsection{Case II: the estimate from below}
\begin{equation}\label{eq:case2}
\biggl|\dashint_{S_{r}} \we_k\biggr|\ge\frac15\lambda\qquad\forall r\in\bigl[R_{k1},R_{k}\bigr]
\end{equation}
{\bf holds.} In particular, now we have 
\begin{equation}\label{eq:case2-1}
\lambda_0=|\we_0|\ge\frac15\lambda>0. 
\end{equation}
This case is much more delicate and subtle. Again we have extra-smallness 
conditions~(\ref{eq:case1-1})--(\ref{eq:case1-2}). The subsequent proof is split into a number of steps.  Below we will see, that the assumption~(\ref{eq:case2}) plays the crucial role in order to control the direction of the vector~$\we_0$ (see Step~4), and, finally, reduce this case to the situation, similar to the symmetric case (see Steps~6--7).

\

{\sc Step 1.}  {\sl We claim, that the estimate from above~
\begin{equation}\label{eq:case2-e1}
\lambda_0\le C\sqrt{\lambda}
\end{equation}
holds}. 

Indeed, under our assumptions,  the Bernoulli pressure $\Pl=\pl+\frac12|\wl|^2$ of the Leray solution goes to zero as well:
\begin{equation}\label{ler-limf}
\lim\limits_{|z|\to+\infty}\Pl(z)=0.
\end{equation}
Then the desired estimate follows immediately from the smallness of the Dirichlet integral $D_L=\int\limits_\Omega|\nabla\we_L|^2<\frac{C \lambda^2}{|\log\lambda|}$ and from Lemmas~11,13 of paper~\cite{KR21} (note, that cited Lemma~13 of \cite{KR21} is a reformulation of \cite[Theorem 11]{Amick84}). 

\medskip

Of course, the obtained estimate~(\ref{eq:case2-e1}) is far from being optimal, but it has very important implications.

\

{\sc Step 2.}  {\sl If $\lambda$ is small enough, then the Leray solution $\we_L$ coincides with the Finn--Smith solution $\we_{FS}(z, \we_0)$}.

This claim follows directly from the uniqueness result of~\cite{KR21}. Now we can obtain the formula for the force. 

\ 

{\sc Step 3.}  {\sl If $\lambda$ is small enough, then the force $\mathbf{F}_L$ satisfies the asymptotic identity}
\begin{equation}\label{eq:case2-e3}
\FF=\bigl(4\pi\ee_0+\boldsymbol{\epsilon}_\lambda\bigr)\frac{\lambda_0}{|\ln\lambda_0|}.
\end{equation}
Here $\ee_0=\frac{\we_0}{|\we_0|}=\frac1{\lambda_0}\we_0$ is the corresponding unit vector, and the vector 
$\boldsymbol{\epsilon}_\lambda$ tends to zero uniformly as $\lambda\to0$. This important identity was established in~\cite[Theorem~5.4]{FS} (see also~\cite[page 27]{FS}). 

Denote by $\varphi_0$ the angle direction of the vector $\we_0$, i.e., 
$$\we_0=\lambda_0(\cos\vp_0,\sin\vp_0)=\lambda_0\ee_0.$$

{\sc Step 4.}  {\sl The estimate
\begin{equation}\label{eq:case2-e4}
|\vp_0|\le\frac{C}{\lambda^2}D_*\le\frac{C}{|\ln\lambda|}
\end{equation}
holds. }

This important estimate follows immediately form Lemma~\ref{lem:angle} and formulas~(\ref{eq:angle-vort}), (\ref{eq:wk-Rk1}),  (\ref{eq:case2}),  (\ref{NSE_k}${}_4$).

Recall, that
\begin{equation}\label{eq:case2-e5'} D_* =  \mathbf{F}_L \cdot (\mathbf{w}_\infty - \mathbf{w}_{0})=\frac{\lambda_0}{|\ln\lambda_0|}\bigl(4\pi\ee_0+\boldsymbol{\epsilon}_\lambda\bigr)\cdot(\lambda\ee_1-\lambda_0\ee_0)\end{equation}
(see Lemma~\ref{lem:force}). 
In particular, if $D_*=0$, then from  (\ref{eq:case2-e4})--(\ref{eq:case2-e5'}) we obtain that $\vp_0=0$ and $\lambda=\lambda_0$, i.e., $\we_0=\we_\infty$, a contradiction. So below we always assume that
$$D_*>0.$$ 
We can rewrite~(\ref{eq:case2-e5'}) as
\begin{equation}\label{eq:case2-e5}
D_*=\frac{\lambda_0}{|\ln\lambda_0|}\biggl(4\pi(\lambda\cos\vp_0-\lambda_0)+\boldsymbol{\epsilon}_\lambda\cdot\bigl[(\lambda,0)-\lambda_0(\cos\vp_0,\sin\vp_0)\bigr]\biggr).
\end{equation}
Then we have, in particular, that 

\

{\sc Step 5.}  {\sl The estimate
\begin{equation}\label{eq:case2-e6}
\lambda_0\le2\lambda
\end{equation}
holds for sufficiently small $\lambda$. }

Thus we can rewrite (\ref{eq:case2-e4}) as 
\begin{equation}\label{eq:case2-e7}
|\vp_0|\le\frac{C}{\lambda^2+\lambda_0^2}D_*. 
\end{equation}

Now we a ready to obtain the most important inequality for this section (cf. with~(\ref{eq:5f-tt})\,): 

\

{\sc Step 6.}  {\sl The estimate
\begin{equation}\label{eq:case2-e8}
D_*\le 5\pi\frac{\lambda_0}{|\ln\lambda_0|}(\lambda-\lambda_0)
\end{equation}
holds for sufficiently small~$\lambda$. }

\begin{proof}
Indeed, rewrite (\ref{eq:case2-e5'}) as
\begin{equation}\label{eq:case2-e9}
D_*= \frac{\lambda_0}{|\ln\lambda_0|}(\lambda-\lambda_0)\bigl(4\pi\ee_0+\boldsymbol{\epsilon}_\lambda\bigr)\cdot\ee_0+\frac{\lambda_0}{|\ln\lambda_0|}\lambda\bigl(4\pi\ee_0+\boldsymbol{\epsilon}_\lambda\bigr)\cdot(\ee_1-\ee_0)= I+II.
\end{equation}
Since $|\ee_1-\ee_0|\le |\vp_0|\le\frac{C}{\lambda^2+\lambda_0^2}D_*$, the second term is of size $o(D_*)$ (it means, that 
$II=\e_\lambda\cdot D_*$, where the value $\e_\lambda\to0$ uniformly as $\lambda\to0$) . Therefore, 
\begin{equation}\label{eq:case2-e10}
D_*(1+\e_\lambda)= \frac{\lambda_0}{|\ln\lambda_0|}(\lambda-\lambda_0)\bigl(4\pi\ee_0+\boldsymbol{\epsilon}_\lambda\bigr)\cdot\ee_0=
\frac{\lambda_0}{|\ln\lambda_0|}(\lambda-\lambda_0)\bigl(4\pi+\e_\lambda\bigr).\end{equation}
So the desired estimate~(\ref{eq:case2-e8}) holds when $\lambda$ is small enough. 
\end{proof}
In particular, now we have 
\begin{equation}\label{eq:case2-e11}
\lambda-\lambda_0>0.
\end{equation}
Recall, that $\Phi_k=p_k+\frac12|\we_k|^2$ means the Bernoulli pressure. The next property sums up the final result of our struggle:  it is  a reward, in a sense, for all previous labors.
\

{\sc Step 7.}  {\sl The estimates 
\begin{equation}\label{eq:case2-Rk1}
\max\limits_{z\in S_{R_{k1}}}\Phi_k(z)\le \frac12{\lambda_0}^2+\epsilon_k, 
\end{equation}
and
\begin{equation}\label{eq:case2-Rkd}
\min\limits_{z\in S_{R_{k\delta}}}\Phi_k(z)\ge \frac12{\lambda_0}^2+\frac{(\lambda-\lambda_0)}2(\lambda+\lambda_0)(1+\e_\lambda).
\end{equation}
hold for sufficiently small~$\lambda$. }

These estimates follow directly from  inequality~(\ref{eq:case2-e8}) of the  Step~6 and from previous estimates  (\ref{eq:fin-Eul-Bpk1})--(\ref{eq:fin-Eul-Bpkd}). In particular, we have 
\begin{equation}\label{eq:case2-Rkf}
\frac\lambda2(\lambda-\lambda_0)+\max\limits_{z\in S_{R_{k1}}}\Phi_k(z)< \min\limits_{z\in S_{R_{k\delta}}}\Phi_k(z)
\end{equation}
for $k$ sufficiently large. This means, that we are faced with the situation, considered in the paper~\cite{KPR20}, and we can obtain the desired contradiction just by repeating the corresponding arguments of that paper. For a reader's convenience, we recall the main ideas of the proof in~\cite{KPR20} adapted for the present paper. 

From (\ref{eq:case2-Rkf}) it follows immediately, that there are two closed regular level sets $S'_{k}$ and $S''_{k}$ of $\Phi_k$ 
such that:

\begin{enumerate}
\item[(i)] \ $\Phi_k|_{S'_k}\equiv t'_k=\frac12\lambda_0^2+\e_k$, \ \ \ \ \ $\Phi_k|_{S''_k}\equiv t''_k\ge t'_k+\frac\lambda2(\lambda-\lambda_0)$. 
\item[(ii)] \qquad $S'_k$, $S''_k$ \ are smooth closed curves (homeomorphic to the circle) surrounding the origin, both of them lie between circles $S_{R_{k1}}$ and $S_{R_{k\delta}}$.  
\item[(iii)] \qquad the vorticity $\omega_k(z)$ does not change sign between the curves $S'_k$, $S''_k$; for definiteness, we can assume without loss of generality that \begin{equation}\label{eq:case2-Rkf1}
\omega_k(z)>0\qquad\mbox{ for all $z$ between  $S'_k$ and $S''_k$}
\end{equation}
\end{enumerate}
(for the last assertion, see \cite[Step~6]{KPR20}\,). 

Now denote $\dashint_{S_{r}} \we_k=\bar\we_k(r)=|\bar\we_k(r)|\,\bigl(\cos\vp_k(r),\sin\vp_k(r))$. Then by the same reasons as for inequality~(\ref{eq:case2-e4}) of the present paper, we have 
\begin{equation}\label{eq:case2-Rkf1}
|\vp_k(r)|\le\frac{C}{\lambda^2}D_*=\e_\lambda\qquad\forall r\in[R_{k1},R_{k\delta}].
\end{equation}
Hence, it is not difficult to prove, that there exists a unit vector $\tilde\ee=(\cos\tilde\theta,\sin\tilde\theta)$ such that the segment 
$L=\{s\tilde\ee:s\in[R_{k1},R_{k\delta}]\}$ satisfies the following properties: 
\begin{enumerate}
\item[(iv)] \ $\we^\bot_k(z)\cdot\tilde\ee<0$ for any $z\in L$, where we denote $(a,b)^\bot=(-b,a)$;  
\item[(v)] \ $\int\limits_L|\nabla\omega_k|\,ds\le\e_k$.  
\end{enumerate}
Take two points $A\in L\cap S'_k$ \ and \ $B\in L\cap S''_k$ such that the line segment $[A,B]$ lies between the curves $S'_k$ and $S''_k$. 
Recall, that the gradient of the Bernoulli pressure satisfies the identity
$$\nabla\Phi_k\equiv-\nabla^\bot\omega_k+\omega_k\cdot\we^\bot_k.$$
Then we have 
\begin{equation}
\label{en25}
\begin{array}{c}
t''_k-t'_k=\Phi_k(B)-\Phi_k(A)=\int\limits_{[A,B]}\nabla\Phi_k\cdot\et\,ds\\[12pt]
=-\int\limits_{[A,B]}\nabla^\bot\omega_k\cdot\et\,ds+
\int\limits_{[A,B]}\omega_k\we_k^\bot\cdot\et\,dr:=I+II.
\end{array}
\end{equation}
Estimate the~terms~$I$ and $II$ separately. From the above property (v) we have
\begin{equation}
\label{en26}
I\le \e_k.
\end{equation}
On the other hand, from (iii)--(iv) we obtain
\begin{equation}
\label{en27}
II<0.
\end{equation}
Therefore, 
$$t''_k-t'_k\le\e_k,$$
a contradiction with~(i). 

Of course, the above items (i)--(v) are only short description. In case of interest, a reader can find a detailed justification for all these steps in~\cite{KPR20}.

\

The proof of  Theorem~\ref{thm_main} is finished completely. \hfill $\qed$

\bibliographystyle{plain}

\end{document}